\documentclass[preprint,12pt]{elsarticle}
\usepackage{amsthm,amsmath,amssymb}
\usepackage[colorlinks=true,citecolor=black,linkcolor=black,urlcolor=blue]{hyperref}
\usepackage{graphicx}
\usepackage{float}
\usepackage{enumitem}

\newcommand{\arxiv}[1]{\href{http://arxiv.org/abs/#1}{\texttt{arXiv:#1}}}

\theoremstyle{plain}
\newtheorem{theorem}{Theorem}
\newtheorem{lemma}[theorem]{Lemma}
\newtheorem{corollary}[theorem]{Corollary}
\newtheorem{proposition}[theorem]{Proposition}

\theoremstyle{definition}
\newtheorem{definition}[theorem]{Definition}

\newtheorem{problem}[theorem]{Problem}

\newtheorem{remark}[theorem]{Remark}

\newcommand{\F}{\mathcal{F}}

\newcommand{\GF}{\mathrm{GF}}

\title{Partial-twuality polynomial interpolation for binary delta-matroids}

\author{Zhuo Li$^{1}$, Qi Yan$^{2}$\footnote{Corresponding author.}, Xian'an Jin$^{1,3}$\\
        ~\\
        \small $^{1}$School of Mathematical Sciences, Xiamen University, P. R. China\\
		\small $^{2}$School of Mathematics and Statistics, Lanzhou University, P. R. China\\
        \small $^{3}$School of Mathematics and Statistics, Qinghai Minzu University, P. R. China\\
    \small\tt Email:   lzhuo@stu.xmu.edu.cn, yanq@lzu.edu.cn, xajin@xmu.edu.cn}

\date{}
\journal{~~}

\begin{document}
\begin{abstract}
Gross, Mansour and Tucker introduced the partial-twuality polynomials for ribbon graphs and investigated the interpolation property of these polynomials. The ribbon group generated by $\delta$ and $\tau$ acts on set systems as twist $\ast$ and loop complementation $\times$, yielding five nontrivial twuality operators:
\(\{\ast,\times,\ast\times  ,\times\ast ,\ast\times\ast \}.\)
Yan and Jin extended partial-twuality polynomials to set systems, yielding partial-$\bullet$ polynomials with $\bullet\in\{\ast,\times,\ast \times  ,\times\ast ,\ast \times  \ast \}.$
For partial-$\ast$ polynomials, Zhao and Yan proved that this polynomial is either even, odd, or both even-interpolating and odd-interpolating for every binary delta-matroid.
In this paper, we extend this interpolation property to all the remaining nontrivial partial-twualities of binary delta-matroids.
Consequently, for every binary delta-matroid and every
$\bullet\in\{\ast,\times,\ast \times  ,\times\ast ,\ast \times  \ast \}$, the partial-$\bullet$
polynomial is either even, odd, or both even-interpolating and odd-interpolating. We also provide examples to show that the binary assumption is essential.
\end{abstract}

\begin{keyword} Delta-matroid, binary, loop complementation, partial-twuality polynomial,
interpolation, ribbon graph
\vskip0.2cm

\end{keyword}
\maketitle

\section{Introduction}
In \cite{Wilson1979}, Wilson studied the ribbon group, which is generated by geometric duality $\delta$ and Petrie duality $\tau$ and yields five nontrivial twuality operators:
\(
\{\delta, \tau, \delta\tau, \tau\delta, \delta\tau\delta\}.
\)
Abrams and Ellis-Monaghan \cite{Abrams2022} referred to these five operators as twualities. Chmutov \cite{Chmutov2009} introduced the partial duality of ribbon graphs, a generalization of geometric duality. Ellis-Monaghan and Moffatt \cite{EllisMonaghanMoffatt2012} generalized this partial duality construction to the other four operators, which they called partial-twualities.

Gross, Mansour, and Tucker \cite{GrossMansourTucker2021} introduced partial-twuality polynomials for ribbon graphs, which enumerate all partial twuals by Euler genus. They showed that for every orientable ribbon graph, the partial-dual polynomial is an even polynomial, while for every ribbon graph, the partial-$\tau$ polynomial and the partial-$\delta\tau\delta$ polynomial are both interpolating.

Delta-matroids, introduced by Bouchet in \cite{Bouchet1987}, provide an algebraic framework for ribbon graphs. In \cite{ChunMoffattNobleRueckriemen2019} and \cite{Chun2019F}, Chun et al. showed that under the correspondence between ribbon graphs and delta-matroids, partial duality corresponds to the twist, while partial Petrial corresponds to loop complementation. Then the ribbon group acts on set systems via the two operations $\ast$ and $\times$, where $\ast$ denotes twist and $\times$ denotes loop complementation. The analogues of the five ribbon graph twualities are
\(
\{\ast, \times, \ast \times , \times \ast, \ast \times  \ast\}.
\)

For a set system $D=(E,\F)$ and $\bullet\in\{\ast, \times, \ast \times , \times \ast, \ast \times \ast\}$, the partial-$\bullet$ polynomial, introduced by Yan and Jin in \cite{YanJin2024}, is
\[
^{\partial}w_{D}^{\bullet}(z)
=
\sum_{A\subseteq E} z^{w(D^{\bullet \mid A})},
\]
where $w(D)$ is the width of \(D\). When $\bullet = \ast$, this is the twist polynomial \cite{YanJin2022}.
It is natural to investigate the interpolation property of such polynomials. Yan and Jin \cite{YanJin2024} proved that, for vf-safe delta-matroids, the partial-$\times$ and partial-$\ast \times \ast$ polynomials are interpolating, and all partial-twuality polynomials have no gaps of size at least two. However, for the other three nontrivial partial-twualities $\{\ast$, $\ast\times$, $\times\ast\}$, the polynomials may not be interpolating. Gross, Mansour and Tucker \cite{GrossMansourTucker2020} asked the following problem for non-orientable ribbon graphs.
\begin{problem}[\cite{GrossMansourTucker2020}]\label{pro1}
    Under what interesting sets of sufficient conditions on a non-orientable ribbon graph is the partial-dual polynomial even-interpolating, odd-interpolating, or both?
\end{problem}

Recently, Zhao and Yan \cite{ZhaoYan2026} proved that for every binary delta-matroid $D$, the twist polynomial ${}^{\partial}w_{D}^{\ast}(z)$ is either even, odd, or both even-interpolating and
odd-interpolating, thus answering Problem \ref{pro1}.
We similarly consider this problem for the remaining partial-twuality polynomials. The challenging cases are the two non-involutive operations $\ast \times$ and $\times \ast$, which correspond respectively to $\delta\tau$ and $\tau\delta$ for ribbon graphs. Our main result is the following.
\begin{theorem}\label{thm}
Let $D$ be a binary delta-matroid and let
\(\bullet\in\{\ast,\times,\ast \times  ,\times\ast ,\ast\times\ast \}.\)
Then the partial-$\bullet$ polynomial
\(^{\partial}w_{D}^{\bullet}(z)\)
is either an even polynomial, an odd polynomial, or both even-interpolating and
odd-interpolating.
\end{theorem}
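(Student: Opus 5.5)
We handle the five operators separately, reducing as much as possible to known results. For $\bullet=\ast$ the statement is exactly the theorem of Zhao and Yan \cite{ZhaoYan2026} for binary delta-matroids. Binary delta-matroids are vf-safe, so for $\bullet\in\{\times,\ast\times\ast\}$ the result of Yan and Jin \cite{YanJin2024} shows the polynomial is interpolating. An interpolating polynomial that is neither even nor odd has two consecutive exponents present. Hence its even and odd exponents each form a contiguous range, because interpolating means every exponent between the minimum and the maximum occurs. This gives both even- and odd-interpolation; if it has a single term it is even or odd. So the real work is $\bullet\in\{\ast\times,\times\ast\}$.

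For these two cases I first relate them. Since $\ast|A$ and $\times|A$ are involutions, $D^{\times\ast|A}$ is the inverse operation $(\ast\times)^{-1}|A$. Moreover, the map $A\mapsto D^{\ast\times|A}$ has the same multiset of widths as the corresponding map for $D^{\ast}$ or $D^{\times}$ after a global twist or loop complementation. More simply, I would try to show ${}^{\partial}w^{\times\ast}_D={}^{\partial}w^{\ast\times}_{D'}$ for a suitable binary $D'$, using the identity $\ast\times\ast=\times\ast\times$ and the fact that a twist on all of $E$ changes width by a fixed rule, $w(D^{\ast})=w(D)$. So it suffices to treat $\ast\times$.

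For binary $D$, represent $D$ (after twisting by a feasible set) by a symmetric binary matrix $M$ over $\GF(2)$. Loop complementation on $A$ toggles diagonal entries on $A$, and twist is a principal pivot. The width of $D^{\ast|A}$ can then be expressed via the rank of a principal submatrix. Explicitly, the width is $|E|$ minus the difference of the max and min feasible sizes, and for binary set systems feasible sizes are determined by ranks of $M[X]$. I would derive a formula
\[
w\bigl(D^{\ast\times|A}\bigr)=f\bigl(\operatorname{rank}(M+I_A)[\cdot]\bigr).
\]
The key step is a single-element analysis. Compare $A$ and $A\triangle\{e\}$: the width changes by $0,\pm1,\pm2$. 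Changes by $\pm1$ occur exactly when a rank parity changes, which happens only if $D$ is non-"even" in a suitable sense, e.g.\ some feasible sets of both parities exist after the operation. Following Zhao--Yan, I would show the following. If the polynomial contains exponents of both parities, then the walk $\emptyset\to E$ through single-element steps can be arranged so that the widths of each parity form consecutive ranges with step $2$. In other words, no gap of size $\ge 4$ occurs within a parity class. By the no-gaps-of-size-$\ge2$ result of \cite{YanJin2024}, consecutive exponents differ by at most $1$ overall, but that alone does not prevent the even exponents from skipping. The main obstacle is exactly showing that between any two attained even (resp.\ odd) widths every intermediate even (resp.\ odd) width is attained when both parities appear. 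I would attack it by choosing $A$ attaining the minimum width and $B$ attaining the maximum, then walking from $A$ to $B$ one element at a time. Along the walk I would use the matrix rank lemma, that adding a row/column to a binary symmetric matrix changes rank by at most $2$, to control the step. When a step of $+2$ jumps over a parity, I would use binarity to find a detour element flipping the parity, mirroring the Zhao--Yan argument.
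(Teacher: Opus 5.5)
Your treatment of $\bullet=\ast$ (the Zhao--Yan theorem) and of $\bullet\in\{\times,\ast\times\ast\}$ is fine: these two polynomials are interpolating, which already gives the trichotomy. The reduction of $\times\ast$ to $\ast\times$ also works, with $D'=D^\ast$. Since $\ast(\ast\times)\ast=\times\ast$ and $w(D^\ast)=w(D)$, one gets ${}^{\partial}w_{D}^{\times\ast}(z)={}^{\partial}w_{D^\ast}^{\ast\times}(z)$; a global loop complementation would not serve, because it can change the width.

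The gap is that the case $\bullet=\ast\times$, which is the whole content of the theorem, is never proved. You correctly state the target (when both parities occur, no parity class skips a value) and call it the main obstacle. But you then only announce that binarity will ``find a detour element flipping the parity''; no such element or mechanism is given. The matrix route is not set up either. The width is $r_{\max}-r_{\min}$, not $|E|$ minus it. The function $f$ is left unspecified, and that is exactly where the work would lie: after a twist by a non-feasible set the system is no longer in normal form, so the subsequent loop complementation is not simply a diagonal toggle of $M$. Also, the no-gap result of Yan--Jin only says consecutive exponents differ by at most $2$; if they differed by at most $1$ there would be nothing to prove.

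What is missing is an extremal argument combined with a parity invariant, not a constructive walk. Since single-element steps change the width by at most $2$, it suffices to show that $k,k+1\in W_{\ast\times}(D)$ implies $k+2\in W_{\ast\times}(D)$. One then inducts upward; below the first parity switch, the attained widths already step by exactly $2$.

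\begin{itemize}
\item Suppose $k+2$ is missed while the maximum exceeds $k+1$. Take $A$ of width $k$, and among the sets of width $k+3$ (one must exist) choose $B$ minimizing $|A\triangle B|$.
\item In $Q=D^{\ast\times|B}$, minimality forces each single step toward $A$ to lower the width by exactly $2$. These steps are $\times\ast$ on $x\in B\setminus A$ and $\ast\times$ on $y\in A\setminus B$. By Table~\ref{ta 1}, every such $x$ has type $tu$ and every such $y$ has type $ut$ in $Q$.
\item Binarity enters through the fact that a binary delta-matroid whose elements all have primal type $u$ is even. Loop-complementing $X=B\setminus A$ in $Q$ (respectively $Y=A\setminus B$ in $Q^\ast$) gives every element of $X\cup Y$ primal type $u$.
\item Deleting the other elements and then twisting preserves evenness, so no unprocessed element can acquire primal type $t$. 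Tracking primal types under loop complementation then shows that at every intermediate stage the unprocessed $x$'s have types in $\{pp,pu,tp,tu\}$ and the unprocessed $y$'s have types in $\{pp,pt,up,ut\}$ (Lemma~\ref{lem6}).
\item For these types the relevant operation changes the width by $0$ or $\pm 2$. So passing from $Q$ to $D^{\ast\times|A}$ changes the width by an even number, contradicting $k-(k+3)=-3$.
\end{itemize}

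Your walk from a minimum-width set to a maximum-width set has no counterpart of this. An arbitrary walk gives no control over element types; it is the minimality of $|A\triangle B|$ that pins them down.
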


Theorem \ref{thm} is not true for non-binary delta-matroids. We construct two non-binary delta-matroids such that the twist polynomial of the first, and the partial-$\times\ast$ and partial-$\ast\times$ polynomials of the second each contain non-zero terms in both even and odd degrees, but it is not both even-interpolating and odd-interpolating.

The remainder of this paper is structured as follows. Section \ref{2} recalls relevant definitions concerning set systems and delta-matroids. In Section \ref{3}, we establish several auxiliary lemmas. In Section \ref{4}, we state our main results and give the proof of Theorem \ref{thm}. In Section \ref{5}, we transfer the results on delta-matroids to ribbon graphs. Section \ref{6} presents examples demonstrating that the binary condition in Theorem \ref{thm} is necessary.

\section{Preliminaries}\label{2}

\subsection{Set systems and delta-matroids}

A \emph{set system} is a pair \(D=(E,\F),\)
where $E$ is a finite set and $\F\subseteq 2^E$ is a collection of \emph{feasible sets}.  We often use $\F(D)$ to denote the set of feasible sets of $D$. The set
system is \emph{proper} if $\F\neq\emptyset$. For $X,Y\subseteq E$, let
\(
X\triangle Y=(X\cup Y)\backslash(X\cap Y)
\)
denote symmetric difference. Throughout the paper, we often omit set brackets for singletons.

\begin{definition}[\cite{Bouchet1987}]
A \emph{delta-matroid} is a proper set system $D=(E,\F)$ satisfying the symmetric exchange
axiom: for any $X,Y\in\F$ and any $u\in X\triangle Y$, there exists
$v\in X\triangle Y$ (possibly $v=u$) such that
\(
X\triangle\{u,v\}\in\F.
\)
\end{definition}

Let $D=(E,\F)$ be a proper set system and let $\F_{\max}(D)$ and $\F_{\min}(D)$ be the collections of feasible sets of maximum
and minimum cardinality, respectively. Write
\[
r_{\max}(D)=\max\{|F| \mid F\in\F\},
\quad
r_{\min}(D)=\min\{|F|\mid F\in\F\}.
\]
The \emph{width} of $D$ is
\[
w(D)=r_{\max}(D)-r_{\min}(D).
\]
For $0\leq i\leq w(D)$, define
\[
\F_{\min+i}(D)=\{F\in\F\mid |F|=r_{\min}(D)+i\},
\]
and
\[
\F_{\max-i}(D)=\{F\in\F\mid |F|=r_{\max}(D)-i\}.
\]
Aside from $\mathcal{F}_{\max}(D)$ and $\mathcal{F}_{\min}(D)$, all other collections may be empty.
A set system  $D=(E,\F)$  is \emph{even} if all feasible sets have the same parity of cardinality.
Equivalently, \(|F_1\triangle F_2|\)
is even for all $F_1,F_2\in\F$.

\subsection{Deletion and contraction}
For a delta-matroid $D = (E, \F)$, and $e \in E$, if $e$ is in every feasible set of $D$, then we say that $e$ is a \emph{coloop} of $D$. If $e$ is in no feasible set of $D$, then we say that $e$ is a \emph{loop} of $D$. 

If $e$ is not a coloop, then, following Bouchet and Duchamp \cite{BouchetDuchamp1991}, we define the \emph{deletion} of $e$ from $D$,
written $D \backslash e$, to be
\[
D \backslash e := \bigl(E \backslash e,\; \{F \mid F \in \F \text{ and } F \subseteq E \backslash e\}\bigr).
\]
If $e$ is not a loop, then we define the \emph{contraction} of $e$, written $D/e$, to be
\[
D/e := \bigl(E \backslash e,\; \{F \backslash e \mid F \in \F \text{ and } e \in F\}\bigr).
\]
If $e$ is a loop or a coloop, then one of $D \backslash e$ and $D/e$ has already been defined, and we set the other equal to it, so that
\[
D/e = D \backslash e.
\]
Both $D \backslash e$ and $D/e$ are delta-matroids (see \cite{BouchetDuchamp1991}). 

For a delta-matroid $D$, since deletions and contractions are independent of the order (see  \cite{BouchetDuchamp1991}), we will define $D' = D \backslash X / Y$ by deleting all elements in \(X\) and contracting all elements in \(Y\) in any order.  Any delta-matroid obtainable from $D$ by a sequence of deletions and contractions is called a \emph{minor} of $D$.

\subsection{Twist, loop complementation and partial-twualities}
 Let \( D = (E, \F) \) be a set system and let \( A \subseteq E \). The \emph{twist} \cite{Bouchet1987} of \( D \) with respect to \( A \), denoted by \( D^{\ast|A} \), is the set system \( (E, \F \triangle A) \), where
    \[
    \F \triangle A := \{ X \triangle A \mid X \in \F \}.
    \]
The dual of $D$ is
\(
D^\ast=D^{\ast|E}.
\)

Let $D=(E,\F)$ be a set system and let $e\in E$. The \emph{loop complementation} \cite{Brijder2011} of $D$ at
$e$ is the set system
\(
D^{\times|e}=(E,\F'),
\)
where
\[
\F'
=
\F\triangle
\{F\cup e\mid F\in\F,\ e\notin F\}.
\]
Loop complementations on different elements commute (see \cite{Brijder2011}). Hence, for $A\subseteq E$,
the set system $D^{\times|A}$ is well-defined.

It has been shown in \cite{Brijder2011} that, for any fixed element \( e \in E \), the twist and loop complementation are involutions (i.e., of order 2). These two operations generate a group isomorphic to \( S_3 \), with the presentation:
\[
 \langle \ast, \times \mid \ast^2, \times^2, (\ast \times  )^3 \rangle.
\]
For a word $\bullet$ in $\ast$ and $\times$, and for $A\subseteq E$, we write
$D^{\bullet|A}$ for the result of applying the operation $\bullet$ to every element of
$A$. When $\bullet$ is a product of operations, the application is sequential. For instance,
\(D^{\ast \times  |A} = (D^{\ast|A})^{\times|A}\).

\begin{definition}[\cite{YanJin2024}]
   For $\bullet \in \{\ast, \times, \ast \times  , \times\ast , \ast \times  \ast \}$, the \emph{partial-$\bullet$ polynomial} of any set system $D=(E, \F)$ is defined to be the generating function
$$^{\partial}w_{D}^{\bullet}(z):=\sum_{A\subseteq E}z^{w(D^{\bullet\mid A})}$$
that enumerates all partial-$\bullet$ duals of $D$ by width.
\end{definition}

\subsection{Binary delta-matroids and types of elements}

Let $C$ be a symmetric matrix over $\GF(2)$ with rows and columns indexed by a finite
set $E$. For $A\subseteq E$, let $C[A]$ denote the principal submatrix induced by $A$.
Define
\(
D(C)=(E,\F(C)),
\)
where
\[
\F(C)=\{A\subseteq E\mid C[A]\text{ is nonsingular over }\GF(2)\}.
\]
By convention, $C[\emptyset]$ is nonsingular, so $\emptyset\in\F(C)$.

\begin{definition}[\cite{BouchetDuchamp1991}]
A delta-matroid $D$ is \emph{binary} if some twist of $D$ is isomorphic to $D(C)$ for a
symmetric matrix $C$ over $\GF(2)$.
\end{definition}

Note that binary delta-matroids are closed under deletion and contraction, and Brijder and Hoogeboom \cite{BrijderHoogeboom2013} proved that they are also closed under twists and loop complementation. In particular, all partial-twuals of a binary
delta-matroid are again binary delta-matroids.

We now recall the classification of elements in a set system by their primal and dual types. Let $D=(E,\F)$ be a proper set system and let $e\in E$.
The \emph{primal type} of $e$ is one of $p,u,t$. It can be characterized as follows:
\begin{enumerate}[label=(\roman*)]
\item $e$ has primal type $p$ in $D$ if and only if there exists $F\in\F_{\min}(D)$ such
that $e\in F$;
\item $e$ has primal type $u$ in $D$ if and only if for every
$F\in\F_{\min}(D)\cup\F_{\min+1}(D)$, one has $e\notin F$;
\item $e$ has primal type $t$ in $D$ if and only if $e\notin F$ for every
$F\in\F_{\min}(D)$, and there exists $F_1\in\F_{\min+1}(D)$ such that
$e\in F_1$.
\end{enumerate}

The \emph{dual type} of $e$ in $D$ is its primal type in the dual delta-matroid $D^\ast$.
Equivalently:
\begin{enumerate}[label=\textup{(\roman*)}]
\item $e$ has dual type $p$ in $D$ if and only if there exists $F\in\F_{\max}(D)$ such
that $e\notin F$;
\item $e$ has dual type $u$ in $D$ if and only if for every
$F\in\F_{\max}(D)\cup\F_{\max-1}(D)$, one has $e\in F$;
\item $e$ has dual type $t$ in $D$ if and only if $e\in F$ for every
$F\in\F_{\max}(D)$, and there exists $F_1\in\F_{\max-1}(D)$ such that
$e\notin F_1$.
\end{enumerate}

The \emph{type} of $e$ is the two-letter word obtained by writing its primal type first and
dual type second. Thus, for example, type $tu$ means primal type $t$ and dual type
$u$.

\subsection{Single-element width changes and interpolation terminology}

Table \ref{ta 1} \cite{YanJin2024} gives the width change under a single-element partial-twuality. In particular, for any \(\bullet \in \{\ast,\times,\ast \times  ,\times\ast ,\ast \times  \ast \}\),
\[
|w(D^{\bullet|e})-w(D)|\leq 2.
\]

\begin{table}[!t]
\centering
\caption{The difference $w(D^{\bullet|e})-w(D)$ for any \(\bullet\in 
\{\ast,
\times,
\ast \times  ,
\times\ast ,
\ast \times  \ast \}.
\)}
\label{ta 1}
\begin{tabular}{c|ccccc}
  \hline
  % after \\: \hline or \cline{col1-col2} \cline{col3-col4} ...
  Type of $e$& $~~~\ast~~~$ & $~~~\times~~~$ & $~~~\ast \times  ~~~$ & $~~~\times\ast ~~~$ & $~~~\ast \times  \ast ~~~$ \\\hline
  $pp$        & $+2$      & $+1$        & $+2$            & $+2$            & $+1$ \\
  $uu$        & $-2$      & 0         & $-1$            & $-1$            & 0 \\
  $pu$        & 0       & 0         & $+1$            & 0             & $+1$ \\
  $up$        & 0       & $+1$        & 0             & $+1$            & 0 \\
  $tp$        & $+1$      & $+1$        & $+1$            & 0             & $-1$ \\
  $tu$        & $-1$      & 0         & 0             & $-2$            & $-1$ \\
  $pt$        & $+1$      & $-1$        & 0             & $+1$            & $+1$ \\
  $ut$        & $-1$      & $-1$        & $-2$            & 0             & 0 \\
  $tt$        & 0       & $-1$        & $-1$            & $-1$            & $-1$ \\
  \hline
\end{tabular}
\end{table}

For a polynomial $p(z)=\sum_{i=0}^{n}c_i z^i$, we say that $p(z)$ has a \emph{gap} of size $k$ at coefficient $c_i$ if $c_{i-1}c_{i+k}\neq 0$ but $c_i=c_{i+1}=\cdots=c_{i+k-1}=0$.
The polynomial $p(z)$ is \emph{interpolating} if it is non-zero and has no gaps.
Write $p(z)=p_e(z^2)+z\,p_o(z^2)$, where $p_e(z^2)$ and $p_o(z^2)$ consist of the even-degree and odd-degree terms of $p(z)$, respectively.
We call $p(z)$ \emph{even-interpolating} (resp.\ \emph{odd-interpolating}) if $p_e(z^2)$ (resp.\ $p_o(z^2)$) is interpolating.
An \emph{even} (resp.\ \emph{odd}) polynomial is a polynomial such that the only terms that have non-zero coefficients are the terms of even (resp.\ odd) degree.

\section{Auxiliary lemmas}\label{3}

This section collects the lemmas needed to prove our main theorem.
The following lemma shows that deletion operations do not alter the primal types of other elements.
\begin{lemma}[\cite{{ZhaoYan2026}}]\label{lem1}
Let $D=(E,\F)$ be a delta-matroid and $e_1,e_2\in E$ with $e_1\neq e_2$. 
Then the primal type of $e_2$ in $D\backslash e_1$ is the same as its primal type in $D$.
\end{lemma}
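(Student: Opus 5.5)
We argue directly from the definitions of primal type, splitting on whether $e_1$ is a coloop. \textbf{Case 1: $e_1$ is a coloop of $D$.} Then $D\backslash e_1=D/e_1$ has feasible sets $\{F\backslash e_1\mid F\in\F\}$. Removing $e_1$ lowers every cardinality by exactly one and does not affect membership of $e_2$. Hence $F\mapsto F\backslash e_1$ maps $\F_{\min}(D)$ onto $\F_{\min}(D\backslash e_1)$ and $\F_{\min+1}(D)$ onto $\F_{\min+1}(D\backslash e_1)$, preserving whether $e_2$ is contained. By characterizations (i)--(iii), the primal type of $e_2$ is unchanged.

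\textbf{Case 2: $e_1$ is not a coloop,} so $\F(D\backslash e_1)=\{F\in\F\mid e_1\notin F\}$. The key fact is that $r_{\min}(D\backslash e_1)=r_{\min}(D)$. We use the standard result (Bouchet) that $\F_{\min}(D)$ is the set of bases of a matroid $D_{\min}$, and that $e$ is a coloop of $D$ iff it is a coloop of $D_{\min}$. So some $B\in\F_{\min}(D)$ avoids $e_1$. Because $D\backslash e_1$ has the same minimum cardinality, we get $\F_{\min}(D\backslash e_1)\subseteq\F_{\min}(D)$ and $\F_{\min+1}(D\backslash e_1)\subseteq\F_{\min+1}(D)$. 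It remains to show that $e_2$ keeps the defining witnesses after deletion.

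\emph{Type $p$.} Suppose $e_2$ lies in some basis of $D_{\min}$, i.e. it is not a loop of $D_{\min}$. Since $e_1$ is not a coloop of $D_{\min}$, the deletion $D_{\min}\backslash e_1$ has bases exactly the bases of $D_{\min}$ avoiding $e_1$. In matroids, deleting a non-coloop $e_1$ does not turn the non-loop $e_2$ into a loop. So $e_2$ lies in some $B'\in\F_{\min}(D)$ with $e_1\notin B'$, and $e_2$ has type $p$ in $D\backslash e_1$. Conversely, type $p$ in $D\backslash e_1$ implies type $p$ in $D$ by the inclusion $\F_{\min}(D\backslash e_1)\subseteq\F_{\min}(D)$.

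\emph{Type $t$.} This is the main step, since it requires the symmetric exchange axiom. Suppose $e_2$ has type $t$ in $D$, witnessed by $F_1\in\F_{\min+1}(D)$ with $e_2\in F_1$. If $e_1\notin F_1$ we are done, so assume $e_1\in F_1$. Take $B\in\F_{\min}(D)$ with $e_1\notin B$; then $e_2\notin B$ because $e_2$ is not of type $p$. Apply the exchange axiom to $X=F_1$, $Y=B$ and $u=e_1\in X\triangle Y$, obtaining $v$ with $F_1\triangle\{e_1,v\}\in\F$.
\begin{enumerate}[label=(\alph*)]
\item If $v=e_1$, then $F_1\backslash e_1$ is a minimum feasible set containing $e_2$, contradicting that $e_2$ is not of type $p$.
\item If $v\in F_1\backslash B$ with $v\neq e_1$, the new set has size $r_{\min}-1$, which is impossible.
\item Hence $v\in B\backslash F_1$, and $F_1\backslash e_1\cup v\in\F_{\min+1}(D)$ contains $e_2$ and avoids $e_1$.
\end{enumerate}
So $e_2$ has type $t$ in $D\backslash e_1$, using also that it is not of type $p$ there by the previous paragraph. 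Conversely, if $e_2$ has type $t$ in $D\backslash e_1$, it is not of type $p$ in $D$ (otherwise it would be $p$ in $D\backslash e_1$), and its witness lies in $\F_{\min+1}(D)$, so it has type $t$ in $D$.

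\emph{Type $u$.} Type $u$ is the remaining possibility on both sides, so it is preserved as well.
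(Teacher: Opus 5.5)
The paper does not prove this lemma; it is quoted from Zhao and Yan, so there is no in-paper argument to compare yours against. On its own terms, your proof is correct and complete.

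The coloop case is a size-shifting bijection. In the non-coloop case, the equality $r_{\min}(D\backslash e_1)=r_{\min}(D)$ gives $\F_{\min}(D\backslash e_1)\subseteq\F_{\min}(D)$ and $\F_{\min+1}(D\backslash e_1)\subseteq\F_{\min+1}(D)$, which settles every ``converse'' direction at once. The only real work is replacing a witness for $e_2$ by one that avoids $e_1$. Your exchange with $u=e_1$ against a minimum feasible set $B$ avoiding $e_1$ does exactly this, and your case split on $v$ is exhaustive.

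You could make the argument rest solely on the symmetric exchange axiom:
\begin{itemize}
\item The fact that some member of $\F_{\min}(D)$ avoids the non-coloop $e_1$ follows by exchanging $u=e_1$ between a set $X\in\F_{\min}(D)$ containing $e_1$ and any feasible $Y$ avoiding $e_1$. Size considerations force $v\in Y\backslash X$.
\item The same one-step exchange, applied to a minimum feasible set containing $e_2$ and to $B$, handles type $p$ without Bouchet's matroid theorem.
\end{itemize}

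The dependence on the exchange axiom is genuine, not cosmetic. In the set system $(\{1,2,3\},\{\{1\},\{2,3\}\})$, the element $2$ has primal type $t$, yet after deleting $1$ it has primal type $p$.
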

The next lemma provides a sufficient condition for a binary delta-matroid to be even based on the primal types of its elements.
\begin{lemma}[\cite{{ZhaoYan2026}}]\label{lem2}
Let $D=(E,\F)$ be a binary delta-matroid. If every element of $E$ has primal type $u$
in $D$, then $D$ is even.
\end{lemma}
\iffalse
\begin{proof}
Since every element has primal type $u$, no element appears in any feasible set of
minimum size or next-to-minimum size. Hence,
\[
\F_{\min}(D)={\emptyset},
\qquad
\F_{\min+1}(D)=\emptyset.
\]
Thus, $D$ is normal and has no feasible singleton.

Since $D$ is normal and binary, there exists a unique symmetric matrix $C$ over
$\GF(2)$ such that
\[
D=D(C).
\]
For every $v\in E$, the singleton ${v}$ is feasible if and only if the diagonal entry
$C_{vv}$ is $1$. Since there are no feasible singletons, all diagonal entries of $C$ are
zero.

Suppose, for contradiction, that $D$ is not even. Then $D$ has a feasible set
$A$ of odd cardinality. Since $A$ is feasible, the principal matrix $C[A]$ is
nonsingular. But $C[A]$ is a symmetric matrix over $\GF(2)$ with zero diagonal.
Every such matrix has even rank. Since $|A|$ is odd, its rank is strictly less than
$|A|$. Hence, $C[A]$ is singular, a contradiction. Therefore, all feasible sets have
even cardinality, and $D$ is even.
\end{proof}
\fi
Since deletion and twisting commute for distinct elements, the following lemma is immediate.
\begin{lemma}[\cite{{ChunMoffattNobleRueckriemen2019}}]\label{lem3}
Let $D$ be a delta-matroid and let $A,B\subseteq E(D)$ with $A\cap B=\emptyset$.
Then
\[
(D^{\ast|A})\backslash B=(D\backslash B)^{\ast|A}.
\]
\end{lemma}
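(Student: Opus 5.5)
The plan is to reduce to the case where $B$ is a single element and then induct on $|B|$. For $B=\emptyset$ there is nothing to prove. If $B=B'\cup\{b\}$ with $b\notin B'$, then by induction
\[
(D^{\ast|A})\backslash B=\bigl((D^{\ast|A})\backslash B'\bigr)\backslash b=\bigl((D\backslash B')^{\ast|A}\bigr)\backslash b .
\]
Here $A\subseteq E(D\backslash B')$, since $A\cap B=\emptyset$. Applying the single-element case to the delta-matroid $D\backslash B'$ then gives $(D\backslash B'\backslash b)^{\ast|A}=(D\backslash B)^{\ast|A}$. Since deletions may be performed in any order, this is the claim.

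For the single-element case, fix $b\notin A$ and write $\F=\F(D)$. The key observation is that $b\in F\triangle A$ if and only if $b\in F$, because $b\notin A$. Two consequences follow. First, $b$ is a coloop (respectively a loop) of $D^{\ast|A}$ exactly when it is a coloop (respectively a loop) of $D$, so the same clause of the definition of deletion applies on both sides. Second, the feasible sets of $D^{\ast|A}$ avoiding $b$ are exactly the twists by $A$ of the feasible sets of $D$ avoiding $b$.

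We then compare feasible sets in two cases.

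\emph{$b$ is not a coloop.} The feasible sets of $(D^{\ast|A})\backslash b$ are $\{F\triangle A\mid F\in\F,\ b\notin F\triangle A\}=\{F\triangle A\mid F\in\F,\ b\notin F\}$. These are precisely the feasible sets of $(D\backslash b)^{\ast|A}$.

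\emph{$b$ is a coloop.} Deletion is then defined as contraction, so the feasible sets of $(D^{\ast|A})\backslash b$ are $\{(F\triangle A)\backslash b\mid F\in\F\}$. Because $b\notin A$, we have $(F\triangle A)\backslash b=(F\backslash b)\triangle A$. This gives the feasible sets of $(D\backslash b)^{\ast|A}$.

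In both cases the ground sets agree, namely $E\backslash b$.

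The only real subtlety is this bookkeeping in the coloop/loop convention for deletion. It is handled by the invariance of membership of $b$ under twisting by $A$. Everything else is a direct comparison of the feasible-set families.
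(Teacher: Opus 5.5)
Your proof is correct and takes the same approach as the paper. The paper offers no argument beyond the remark that deletion and twisting commute on distinct elements (citing Chun et al.), and your single-element comparison of feasible sets followed by induction on $|B|$ is a fleshed-out version of that remark. You rightly use $b\notin A$ to ensure that $b$ is a loop or coloop of $D^{\ast|A}$ exactly when it is one of $D$, so the same deletion convention applies on both sides.
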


\begin{lemma}[\cite{ChunMoffattNobleRueckriemen2019}]\label{Chu1}
Let \(D=(E,\F)\) be a delta-matroid and suppose that \(e\) has primal
type \(t\). Then, for every \(M\subseteq E\backslash e\), $M \in\F_{\min}(D)$ if and only if $M \cup  e\in\F_{\min+1}(D)$. 
\end{lemma}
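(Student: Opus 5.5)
The plan is to prove both directions of Lemma \ref{Chu1} directly from the symmetric exchange axiom. Each direction will be an extremal argument. Throughout I use two facts. First, since $e$ has primal type $t$, no member of $\F_{\min}(D)$ contains $e$, and some $F_1\in\F_{\min+1}(D)$ contains $e$. Second, no feasible set has size below $r_{\min}(D)$. Combined with the parity of the size change, this rules out most branches of each exchange.

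\emph{Forward direction.} Let $M\in\F_{\min}(D)$, so $e\notin M$. Among all $F\in\F_{\min+1}(D)$ with $e\in F$ (there is at least one, namely $F_1$), I choose one minimizing $|F\triangle(M\cup e)|$. Suppose $F\neq M\cup e$. Since $|F|=|M\cup e|$, there is some $u\in F\backslash(M\cup e)$, so $u\neq e$ and $u\in F\triangle M$. Apply the exchange axiom to $X=F$, $Y=M$ and $u$; this gives $v\in F\triangle M$ with $F\triangle\{u,v\}\in\F$.
\begin{itemize}
\item If $v=u$, then $F\backslash u$ is a feasible set of size $r_{\min}$ containing $e$, contradicting type $t$.
\item If $v\in F\backslash M$ and $v\neq u$, then $|F\triangle\{u,v\}|=r_{\min}-1$, which is impossible.
\item Hence $v\in M\backslash F$. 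Then $(F\backslash u)\cup v$ lies in $\F_{\min+1}(D)$, still contains $e$, and is strictly closer to $M\cup e$. This contradicts minimality.
\end{itemize}
So $F=M\cup e$, and therefore $M\cup e\in\F_{\min+1}(D)$.

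\emph{Reverse direction.} Let $M\cup e\in\F_{\min+1}(D)$, so $|M|=r_{\min}$. Choose $G\in\F_{\min}(D)$ minimizing $|G\triangle M|$, and suppose $G\neq M$; recall $e\notin G$.
\begin{itemize}
\item First exchange: apply the axiom to $X=M\cup e$, $Y=G$ and $u=e$. If the returned $v$ equals $e$, then $M\in\F$, which is what we want. If $v\in M\backslash G$, we get a feasible set of size $r_{\min}-1$, which is impossible. Otherwise $v\in G\backslash M$.
\item Second exchange: with this $v\in G\backslash M$, apply the axiom to $X=G$, $Y=M\cup e$ and $u=v$. This gives $w\in G\triangle(M\cup e)$ with $G\triangle\{v,w\}\in\F$. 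If $w=v$ or $w\in G\backslash M$, the result has size $r_{\min}-1$, which is impossible. So $w\in(M\cup e)\backslash G$, and $G'=(G\backslash v)\cup w\in\F_{\min}(D)$.
\item If $w=e$, then $G'$ is a minimum-size feasible set containing $e$, contradicting type $t$. If $w\neq e$, then $w\in M$, so $|G'\triangle M|=|G\triangle M|-2$, contradicting the choice of $G$.
\end{itemize}
Thus either the first exchange already gives $M\in\F$, or $G=M$; in both cases $M\in\F_{\min}(D)$.

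The main obstacle is the reverse direction. A single exchange there can produce the unhelpful set $M\cup v$. The fix is the second exchange, run from $G$ back toward $M\cup e$. In that step, type $t$ is used exactly once, to exclude $w=e$, and the minimality of $|G\triangle M|$ excludes every other outcome.
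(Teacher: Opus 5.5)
Your proof is correct. The paper gives no argument for this lemma; it is imported from Chun, Moffatt, Noble and Rueckriemen and used only for the case $t\mapsto u$ of Lemma~\ref{lem4}(a). Your proposal therefore supplies a self-contained verification directly from the symmetric exchange axiom, via two closest-set extremal arguments. This fits the paper's Remark that the statement fails for arbitrary proper set systems.

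Two small points:
\begin{enumerate}
\item In the second exchange, the branch $w\in G\backslash M$ with $w\neq v$ gives a set of size $r_{\min}(D)-2$, not $r_{\min}(D)-1$. The conclusion is unaffected.
\item More substantively, the first exchange in the reverse direction is redundant. Since $G\neq M$ and $|G|=|M|$, the set $G\backslash M$ is nonempty. Your second exchange, applied to any $v\in G\backslash M$, already gives a contradiction in every branch: either with type $t$ (when $w=e$), with the minimality of $|G\triangle M|$, or with $r_{\min}(D)$. It never uses that $M\cup v$ is feasible.
\end{enumerate}
So the reverse direction is a single extremal exchange argument mirroring the forward one, and the ``obstacle'' you describe does not actually arise.
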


We now describe how loop complementation affects primal types.

\begin{lemma}\label{lem4}
Let $D$ be a delta-matroid and let $e,f\in E(D)$.
\begin{enumerate}[label=\textup{(\alph*)}]
\item Under loop complementation at $e$, the primal type of $e$ in $D$ transforms to its primal type in $D^{\times|e}$ as follows:
$u$ and $t$ are interchanged, while $p$ remains $p$.

\item If $f\neq e$, then the primal type of $e$ in $D$ is the same as its primal type in $D^{\times|f}$.
\end{enumerate}
\end{lemma}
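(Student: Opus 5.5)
The plan is to work directly from the definition of loop complementation. For $g\in E$, the feasible sets of $D^{\times|g}$ are as follows. A set not containing $g$ is feasible in $D^{\times|g}$ if and only if it is feasible in $D$. For $X\subseteq E\backslash g$, the set $X\cup g$ is feasible in $D^{\times|g}$ if and only if exactly one of $X$ and $X\cup g$ is feasible in $D$. Primal type depends only on $r_{\min}$, $\F_{\min}$ and $\F_{\min+1}$, so I will compare these three data for $D$ and $D^{\times|g}$.

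\emph{Step 1: $r_{\min}$ and $\F_{\min}$ are unchanged by $\times|g$.} Write $r=r_{\min}(D)$. Sets without $g$ are unaffected, so it suffices to consider a set $X\cup g$ that is feasible in $D^{\times|g}$ but not in $D$. Then $X$ is feasible in $D$, so $|X\cup g|=|X|+1\ge r+1$. Hence $D^{\times|g}$ has no feasible set of size less than $r$, and it still has the old minimum sets not containing $g$.

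If every set in $\F_{\min}(D)$ contains $g$, this argument alone does not show that $\F_{\min}(D^{\times|g})$ is nonempty. So consider $X\cup g$ of size $r$. Then $|X|=r-1<r$, so $X\notin\F$, and $X\cup g$ is feasible in $D^{\times|g}$ exactly when it is feasible in $D$. Hence $\F_{\min}(D^{\times|g})=\F_{\min}(D)$ and $r_{\min}$ is preserved. In particular, the property ``$e$ lies in some minimum feasible set'' is preserved, which settles the $p$ case in both (a) and (b).

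\emph{Step 2: part (b), $g=f\neq e$.} Assume $e$ has primal type $u$ or $t$, so $e$ lies in no minimum feasible set. By Step 1 this also holds in $D^{\times|f}$. It remains to show that $D$ has a set in $\F_{\min+1}$ containing $e$ if and only if $D^{\times|f}$ does. Since $\times|f$ is an involution, it suffices to prove one direction.

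Let $F\in\F_{\min+1}(D)$ with $e\in F$. If $f\notin F$, then $F$ stays feasible in $D^{\times|f}$. If $F=X\cup f$ and $F$ stops being feasible, then $X$ is feasible in $D$ with $|X|=r$. This gives $e\in X\in\F_{\min}(D)$, a contradiction. So $F$ survives in either case, and the primal type of $e$ is preserved.

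\emph{Step 3: part (a), $g=e$.} By Step 1, $p$ stays $p$. Otherwise, consider $X\subseteq E\backslash e$ with $|X|=r$; the sets in $\F_{\min+1}$ containing $e$ are exactly the feasible sets of the form $X\cup e$.

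If $e$ has type $t$ in $D$, then Lemma~\ref{Chu1} gives $X\cup e\in\F$ if and only if $X\in\F$. So exactly one of them is never feasible, and no $X\cup e$ is feasible in $D^{\times|e}$. Hence $e$ has type $u$ there.

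If $e$ has type $u$ in $D$, then no such $X\cup e$ is feasible in $D$. Choosing $X\in\F_{\min}(D)$ (recall $e\notin X$) makes $X\cup e$ feasible in $D^{\times|e}$, so $e$ has type $t$.

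The only delicate point is the case analysis in Step 1 showing that $\F_{\min}$ is literally unchanged. The remaining steps are short, and the application of Lemma~\ref{Chu1} in Step 3 is the key input.
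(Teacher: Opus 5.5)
Your proposal is correct and follows essentially the same route as the paper: show $\F_{\min}$ is invariant under $\times|g$; for $t\mapsto u$, use Lemma~\ref{Chu1} to see that no $X\cup e$ of size $r+1$ survives; for $u\mapsto t$, use $M\cup e$ with $M\in\F_{\min}(D)$; and for part (b), observe that $S\backslash f$ cannot be feasible when it contains $e$. The differences are cosmetic. You verify $\F_{\min}(D^{\times|g})=\F_{\min}(D)$ explicitly, while the paper simply cites the definition. You also obtain the converse in (b) from the involution property, whereas the paper shows directly that the feasibility of every size-$(r+1)$ set containing $e$ is unchanged.
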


\begin{proof}
Let \(r=r_{\min}(D)\). First consider loop complementation at $e$. It follows from the definition of loop complementation that
\(
\F_{\min}(D^{\times|e})=\F_{\min}(D).
\)
Hence, if \(e\) has primal type $p$  in  \(D\), then \(e\) has primal type $p$ in \(D^{\times|e}\). 

Suppose $e$ has primal type $u$ in $D$. Then $e$ appears in no member of
\(
\F_{\min}(D)\cup\F_{\min+1}(D).
\)
In particular, for every \(M\in\F_{\min}(D)\), the set $M\cup e$ is not a feasible set of $D$.
Hence, by the definition of loop complementation, $M\cup e$ is a feasible set of
$D^{\times|e}$ whenever $M\in\F_{\min}(D)$. Since $\F_{\min}(D^{\times|e})=\F_{\min}(D)$,
the element $e$ appears in no member of $\F_{\min}(D^{\times|e})$, while each
$M\cup e$ (having size $r+1$) belongs to $\F_{\min+1}(D^{\times|e})$.
Thus, $e$ has primal type $t$ in $D^{\times|e}$.

Conversely, suppose $e$ has primal type $t$ in $D$. By Lemma~\ref{Chu1}, for every
$M\subseteq E\backslash e$, we have $M\in\F_{\min}(D)$ if and only if
$M\cup e\in\F_{\min+1}(D)$. In $D^{\times|e}$, each such set $M\cup e$ is removed
from the feasible family, so no feasible set in $\F_{\min+1}(D^{\times|e})$ contains
$e$. Moreover, since $\F_{\min}(D^{\times|e})=\F_{\min}(D)$ and $e$ appears in no
member of $\F_{\min}(D)$, the element $e$ does not belong to any set in
$\F_{\min}(D^{\times|e})$. Thus, $e$ has primal type $u$ in $D^{\times|e}$. This
proves part (a).

Now assume $f\neq e$. Again $\F_{\min}(D^{\times|f})=\F_{\min}(D)$, so if $e$ has
primal type $p$ in $D$, then $e$ also has primal type $p$ in $D^{\times|f}$.

Suppose $e$ does not have primal type $p$. Then no feasible set in $\F_{\min}(D)$
contains $e$. Let $S\subseteq E$ with $|S|=r+1$ and $e\in S$. If $f\notin S$, then
loop complementation at $f$ does not affect $S$, so $S\in\F_{\min+1}(D)$ if and
only if $S\in\F_{\min+1}(D^{\times|f})$. If $f\in S$, then
$|S\backslash f|=r$ and $e\in S\backslash f$. Since $e$ belongs to no set in
$\F_{\min}(D)$, the set $S\backslash f$ is not feasible in $D$, and consequently
loop complementation at $f$ leaves the feasibility of $S$ unchanged. Hence,
$S\in\F_{\min+1}(D)$ if and only if $S\in\F_{\min+1}(D^{\times|f})$.
Thus, the members of $\F_{\min+1}$ that contain $e$ are the same in $D$ and
$D^{\times|f}$.

Hence, the primal type of $e$ in $D^{\times|f}$ is the same as its primal type in $D$.
This proves part (b).
\end{proof}

\begin{remark}
The delta-matroid assumption in Lemma~\ref{lem4}(a) is needed only for the
case \(t\mapsto u\). The proof of this case uses Lemma~\ref{Chu1}, which is valid
for delta-matroids but not for arbitrary proper set systems. For example,
let
\[
    E=\{1,2,3\},
    \qquad
    \F=\bigl\{\{2\},\{1,3\}\bigr\}.
\]
Then \(1\) has primal type \(t\) in \(D=(E,\F)\). However,
\[
    \F(D^{\times|1})
    =
    \bigl\{\{2\},\{1,2\},\{1,3\}\bigr\},
\]
so \(1\) still has primal type \(t\) in \(D^{\times|1}\), rather than primal type
\(u\). In contrast, the cases \(p\mapsto p\) and \(u\mapsto t\) hold for
arbitrary proper set systems. On the other hand, Lemma~\ref{lem4}(b) does not
require the delta-matroid assumption.
\end{remark}

\begin{lemma}\label{lem5}
Let $D$ be a binary delta-matroid and let $S\subseteq E(D)$. Suppose every element of
$S$ has primal type $u$ in $D$. Let $T\subseteq S$ and let $e\in S\backslash T$. Then $e$ does
not have primal type $t$ in $D^{\ast|T}$.
\end{lemma}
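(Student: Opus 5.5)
The plan is to reduce to the case $S=E(D)$ by deleting everything outside $S$, and then use parity. Once every element has primal type $u$, Lemma~\ref{lem2} makes $D$ even. Twisting preserves evenness, and an even set system has $\F_{\min+1}=\emptyset$, so no element can have primal type $t$.

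\emph{Step 1 (reduction).} Put $X=E(D)\backslash S$, so $T\subseteq S$ is disjoint from $X$.
\begin{itemize}
\item By Lemma~\ref{lem3},
\[
(D^{\ast|T})\backslash X=(D\backslash X)^{\ast|T}.
\]
\item Apply Lemma~\ref{lem1} repeatedly in the delta-matroid $D^{\ast|T}$, deleting the elements of $X$ one at a time. Since $e\notin X$, the primal type of $e$ in $D^{\ast|T}$ equals its primal type in $(D\backslash X)^{\ast|T}$.
\item Apply Lemma~\ref{lem1} in $D$ as well. Every element of $S$ still has primal type $u$ in $D\backslash X$.
\item $D\backslash X$ is binary, because binary delta-matroids are closed under deletion.
\end{itemize}
Hence it suffices to prove the claim for $D\backslash X$. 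So we may assume $S=E(D)$, that is, every element of $D$ has primal type $u$.

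\emph{Step 2 (parity).} By Lemma~\ref{lem2}, $D$ is even. For any feasible $F$ we have $|F\triangle T|\equiv |F|+|T|\pmod 2$. So all feasible sets of $D^{\ast|T}$ have the same parity, and $D^{\ast|T}$ is even. Consequently no feasible set of $D^{\ast|T}$ has size $r_{\min}(D^{\ast|T})+1$, so $\F_{\min+1}(D^{\ast|T})=\emptyset$. By definition, primal type $t$ requires a member of $\F_{\min+1}$ containing $e$. Therefore $e$ does not have primal type $t$ in $D^{\ast|T}$.

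\emph{Main obstacle.} The only delicate point is the reduction in Step 1. Elements of $X$ may be coloops or loops, in which case deletion is defined via contraction. Lemma~\ref{lem1} is stated for the operation $D\backslash e_1$ in general, so it covers these cases. We also need that deletion commutes with the twist on the disjoint set $T$, which is exactly Lemma~\ref{lem3}. Note that the hypothesis $e\notin T$ is used only to ensure $e$ is a genuine element of $S$ left untouched by the reduction. The parity argument in fact shows that no element of $D^{\ast|T}$ has primal type $t$.
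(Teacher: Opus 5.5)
Your proof is correct and is essentially the paper's argument. You delete $E(D)\backslash S$ and use Lemmas~\ref{lem1}, \ref{lem2} and \ref{lem3} together with closure of binary delta-matroids under deletion to see that $(D^{\ast|T})\backslash(E(D)\backslash S)=(D\backslash(E(D)\backslash S))^{\ast|T}$ is even; you then conclude because an even delta-matroid has $\F_{\min+1}=\emptyset$ and so no element of primal type $t$, which you state directly where the paper argues by contradiction.

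One inaccuracy in your closing remark does not affect the proof: only deletion of a coloop is defined via contraction, while a loop is deleted in the ordinary way.
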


\begin{proof}
For convenience, set $S^c = E(D)\backslash S$.  By Lemma~\ref{lem1}, deleting elements
of $S^c$ does not change the primal type of any element of $S$. Hence, every element
of $S$ has primal type $u$ in
\(D\backslash S^c.\)
Since binary delta-matroids are closed under deletion, $D\backslash S^c$ is binary. By
Lemma~\ref{lem2}, the delta-matroid $D\backslash S^c$ is even.
Twisting preserves evenness, so
\((D\backslash S^c)^{\ast|T}\)
is even. Since $T\subseteq S$, Lemma~\ref{lem3} gives
\[
(D^{\ast|T})\backslash S^c
=
(D\backslash S^c)^{\ast|T}.
\]
Thus, $(D^{\ast|T})\backslash S^c$ is even.

Suppose, for contradiction, that $e$ has primal type $t$ in $D^{\ast|T}$. By
Lemma~\ref{lem1}, $e$ still has primal type $t$ in \((D^{\ast|T})\backslash S^c.\) But an even delta-matroid has no element of primal type $t$, a contradiction.
Therefore, $e$ cannot have primal type $t$ in $D^{\ast|T}$.
\end{proof}

We now prove the main lemma. Note that
\(
(\ast\times)^{-1}=\times \ast,\)
and 
\((\times \ast)^{-1}=\ast \times.
\)
\begin{lemma}\label{lem6}
Let $D$ be a binary delta-matroid and let $X,Y\subseteq E(D)$ be disjoint subsets.
Suppose that in $D$ every element of $X$ has type $tu$ and every element of $Y$ has
type $ut$. Let \( X_0\subseteq X,\) and  \(Y_0\subseteq Y,\)
and define \(D_0=(D^{\times\ast| X_0}{})^{\ast \times|  Y_0}.\)
Then the following hold.
\begin{enumerate}[label=\textup{(\roman*)}]
\item If $x\in X\backslash X_0$, then in $D_0$ the type of $x$ belongs to
\(
\{pp,pu,tp,tu\}.
\)
Consequently,
\(
w(D_0^{\times \ast | x})-w(D_0)
\)
is even.

\item If $y\in Y\backslash Y_0$, then in $D_0$ the type of $y$ belongs to
\( \{pp,pt,up,ut\}. \)
Consequently,
\( w(D_0^{\ast \times |  y})-w(D_0)\)
is even.

\end{enumerate}
\end{lemma}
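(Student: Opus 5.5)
The plan is to reduce both parts to Lemma~\ref{lem5} by rewriting $D_0$ and $D_0^\ast$ as a twist on a subset of $S=X\cup Y$, applied to an auxiliary delta-matroid in which every element of $S$ has primal type $u$, followed only by loop complementations. Two facts are used throughout. First, twists and loop complementations at distinct elements commute, so an operation sequence is determined by the word in $\langle\ast,\times\rangle\cong S_3$ applied at each element. Second, binary delta-matroids are closed under twists and loop complementations, so every auxiliary delta-matroid below is binary and Lemma~\ref{lem5} applies.

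\emph{Part (i), primal type of $x$.} Let $D''=D^{\times|X}$. In $D$, elements of $X$ have primal type $t$ and elements of $Y$ have primal type $u$. By Lemma~\ref{lem4}(a),(b), every element of $S$ has primal type $u$ in $D''$. In $D_0$ the word applied at an element is $\times\ast$ on $X_0$, $\ast\times$ on $Y_0$, and trivial elsewhere. Relative to $D''$ the extra word is therefore $\ast$ on $X_0$, $\times$ on $X\backslash X_0$, $\ast\times$ on $Y_0$, and trivial elsewhere. This gives
\[
D_0=\bigl((D'')^{\ast|X_0\cup Y_0}\bigr)^{\times|(X\backslash X_0)\cup Y_0}.
\]
For $x\in X\backslash X_0$, Lemma~\ref{lem5} with $T=X_0\cup Y_0$ shows that $x$ has primal type $p$ or $u$ in $(D'')^{\ast|T}$. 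Loop complementations at other elements do not change this (Lemma~\ref{lem4}(b)). Loop complementation at $x$ itself sends $p\mapsto p$ and $u\mapsto t$ (Lemma~\ref{lem4}(a)). Hence the primal type of $x$ in $D_0$ lies in $\{p,t\}$.

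\emph{Part (i), dual type of $x$.} Let $D'=(D^\ast)^{\times|Y}$. In $D^\ast$, elements of $X$ have primal type $u$ and elements of $Y$ have primal type $t$. By Lemma~\ref{lem4}, every element of $S$ has primal type $u$ in $D'$. Comparing the words at each element, $D_0^\ast$ is obtained from $D'$ by applying $\ast\times$ on $X_0$, $\times$ on $Y\backslash Y_0$, $\ast$ on $Y_0$, and nothing at $X\backslash X_0$. Thus
\[
D_0^\ast=\bigl((D')^{\ast|X_0\cup Y_0}\bigr)^{\times|X_0\cup(Y\backslash Y_0)},
\]
and $x$ is untouched. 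By Lemma~\ref{lem5} and Lemma~\ref{lem4}(b), $x$ does not have primal type $t$ in $D_0^\ast$, so its dual type in $D_0$ lies in $\{p,u\}$. Combining the two paragraphs, the type of $x$ lies in $\{pp,pu,tp,tu\}$. The $\times\ast$ column of Table~\ref{ta 1} gives the changes $+2,0,0,-2$ for these types, which are all even.

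Part (ii) is the symmetric argument with the roles of primal and dual exchanged. The analogous decompositions use $D^{\times|Y}$-type and $(D^\ast)^{\times|X}$-type starting points. These show that the primal type of $y$ lies in $\{p,u\}$ and its dual type lies in $\{p,t\}$. The $\ast\times$ column of Table~\ref{ta 1} then gives $+2,0,0,-2$ for $pp,pt,up,ut$.

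The main obstacle is the bookkeeping in $S_3$. At every element, the required correction word must be of the form $\ast^b\times^c$, with no leading $\times$. A leading loop complementation at some $f\in S$ would turn $f$ into primal type $t$ before the twist, and Lemma~\ref{lem5} would no longer apply. I would verify element-by-element that the corrections are exactly as listed above, and that the element under consideration receives no twist.
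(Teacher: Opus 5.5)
Your part (i) coincides with the paper's Steps 1 and 2. You use the same auxiliary binary delta-matroids $D^{\times|X}$ and $(D^\ast)^{\times|Y}$ and the same decompositions $D_0=((D^{\times|X})^{\ast|T})^{\times|(X\backslash X_0)\cup Y_0}$ and $D_0^\ast=(((D^\ast)^{\times|Y})^{\ast|T})^{\times|X_0\cup(Y\backslash Y_0)}$, where $T=X_0\cup Y_0$. Lemmas~\ref{lem5} and~\ref{lem4} are applied in the same way.

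The gap is in part (ii). The starting points you name there, $D^{\times|Y}$ and $(D^\ast)^{\times|X}$, do not satisfy the hypothesis of Lemma~\ref{lem5}. In $D^{\times|Y}$ the elements of $Y$ pass from primal type $u$ to $t$, while the elements of $X$ keep primal type $t$. So every element of $S$ has primal type $t$, and the same happens in $(D^\ast)^{\times|X}$. Relative to $D^{\times|Y}$, the correction word needed on $X_0$ to reach $D_0$ is $\times\ast$. This is exactly the leading loop complementation that your last paragraph identifies as fatal. The symmetry relating (i) and (ii) must exchange $D\leftrightarrow D^\ast$ and $X\leftrightarrow Y$ at the same time, because dualizing turns type $tu$ into $ut$. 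That composite sends the pair $\{D^{\times|X},(D^\ast)^{\times|Y}\}$ to itself. Performing only one of the two exchanges produces the pair you wrote.

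The repair needs no new auxiliary objects: reuse the two decompositions from part (i). Take $y\in Y\backslash Y_0$.
\begin{itemize}
\item Since $y\notin T$ and $y\notin (X\backslash X_0)\cup Y_0$, Lemma~\ref{lem5} and Lemma~\ref{lem4}(b) put the primal type of $y$ in $D_0$ in $\{p,u\}$.
\item Since $y\in X_0\cup(Y\backslash Y_0)$, Lemma~\ref{lem5} and Lemma~\ref{lem4}(a) put the primal type of $y$ in $D_0^\ast$ in $\{p,t\}$. Hence its dual type in $D_0$ lies in $\{p,t\}$.
\end{itemize}
This is precisely the paper's Steps 3 and 4.

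Alternatively, apply part (i) to $D^\ast$ with $(X,Y,X_0,Y_0)$ replaced by $(Y,X,Y_0,X_0)$. This uses three facts: $D_0^\ast=((D^\ast)^{\ast\times|X_0})^{\times\ast|Y_0}$; in $D^\ast$ the elements of $Y$ have type $tu$ and those of $X$ have type $ut$; and operations at distinct elements commute. Reversing the two letters of each type in $\{pp,pu,tp,tu\}$ then gives $\{pp,up,pt,ut\}$.
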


\begin{proof}
Let \(S=X\cup Y,\) and \(T=X_0\cup Y_0.\)
Since $X_0\subseteq X$ and $Y_0\subseteq Y$, the sets $X_0$ and $Y_0$ are disjoint. We first prove statement (i). Let \(x\in X\backslash X_0.\) We shall show that in $D_0$ the primal type of $x$ is not $u$, and the dual type of
$x$ is not $t$. These two exclusions imply that the type of $x$ belongs to \(\{pp,pu,tp,tu\}.\)

\medskip
\textbf{Step 1:} the primal type of $x$ in $D_0$ is not $u$.

Since every element of $X$ has type $tu$ in $D$, every element of $X$ has primal type
$t$ in $D$. Every element of $Y$ has type $ut$ in $D$, so every element of $Y$ has primal
type $u$ in $D$.  Define  $\widetilde D=D^{\times|X}.$
By Lemma~\ref{lem4}, every element of \(S=X\cup Y\) has primal type $u$ in $\widetilde D$.
Since $x\in X\backslash X_0$, we have \(x\notin T.\)
Applying Lemma~\ref{lem5} to $\widetilde D$, $S$, $T$, and $x$, we
obtain that $x$ does not have primal type $t$ in
\(\widetilde D^{\ast|T}.\)
Therefore, the primal type of $x$ in $\widetilde D^{\ast|T}$ is either $p$ or $u$.
Now compare $\widetilde D^{\ast|T}$ with $D_0$. Since
\[D_0=(D^{\times\ast |X_0}{})^{\ast \times |Y_0}\]
and \[\widetilde D^{\ast|T}=(D^{\times|X}{})^{\ast|X_0\cup Y_0},\]
we have
\[(\widetilde D^{\ast|T}{})^{\times|(X\backslash X_0)\cup Y_0}=((D^{\times|X}{})^{\ast|X_0\cup Y_0}{})^{\times|(X\backslash X_0)\cup Y_0}=D_0.\]
By performing loop complementations on $\widetilde D^{\ast|T}$ at the subset \((X\backslash X_0)\cup Y_0\), we obtain \(D_0\).
Since \(x\in (X\backslash X_0)\cup Y_0\) and the primal type of $x$ in $\widetilde D^{\ast|T}$ is either $p$ or $u$, by Lemma \ref{lem4}(a), it follows that the primal type of $x$ in $D_0$ is either $p$ or $t$. Equivalently, it is not $u$.

\medskip
\textbf{Step 2:} the dual type of $x$ in $D_0$ is not $t$.

The dual type of $x$ in $D_0$ is the primal type of $x$ in $D_0^\ast$. We therefore
consider the dual delta-matroid $D^\ast$.
In $D$, every element of $X$ has type $tu$, so in $D^\ast$ every element of $X$ has
primal type $u$. Similarly, every element of $Y$ has type $ut$ in $D$, so in $D^\ast$
every element of $Y$ has primal type $t$.
Define $\widehat D=(D^\ast)^{\times|Y}.$
By Lemma~\ref{lem4}, every element of $S=X\cup Y$ has primal type $u$ in $\widehat D$. Since $x\notin T$, Lemma~\ref{lem5}
implies that $x$ does not have primal type $t$ in
\(\widehat D^{\ast|T}.\)
We now compare $D_0^\ast$ with $\widehat D^{\ast|T}$, 
where
\[
D_0^\ast
=((D^{\times\ast|X_0}){}^{\ast \times  |Y_0}){}^{\ast|E(D)}=
((D^\ast)^{\ast \times  |X_0}){}^{\times\ast |Y_0},
\]
and
\[
\widehat D^{\ast|T}
=
((D^\ast)^{\times|Y}){}^{\ast|X_0\cup Y_0}.
\]
Thus,
\[
(\widehat D^{\ast|T})^{\times\,|\,X_0\cup (Y\backslash Y_0)}
=(((D^\ast)^{\times|Y})^{\ast|X_0\cup Y_0})^{\times\,|\,X_0\cup (Y\backslash Y_0)}
=D_0^*.
\]
Since \(x\notin X_0\cup (Y\backslash Y_0)\) and the primal type of $x$ in $\widehat D^{\ast|T}$ is either $p$ or $u$, by Lemma~\ref{lem4}(b), it follows that the primal type of $x$ in $D_0^\ast$ is either $p$ or $u$. 
Hence, $x$ does not have primal type $t$ in $D_0^\ast$. Equivalently, $x$ does not
have dual type $t$ in $D_0$.

Combining Steps 1 and 2, the primal type of $x$ in $D_0$ lies in $\{p,t\}$, and
the dual type of $x$ lies in $\{p,u\}$. Hence, the type of $x$ in $D_0$ is one of
\(
\{pp, pu, tp, tu\}.
\)
Consulting Table \ref{ta 1} for $\times\ast$, the corresponding width
changes are
\(\{2, 0, 0, -2\}.\)
Thus, \(w(D_0^{\times\ast |x})-w(D_0)\)
is even. This proves statement (i).

We now prove statement (ii). Let
\(y\in Y\backslash Y_0.\)
We shall show that in $D_0$ the primal type of $y$ is not $t$, and the dual type of
$y$ in $D_0$ is not $u$. These two exclusions imply that the type of $y$ in $D_0$ belongs to \(\{pp,pt,up,ut\}.\)

\medskip
\textbf{Step 3:} the primal type of $y$ in $D_0$ is not $t$.

As above, set \(\widetilde D=D^{\times|X}.\)
We have already shown that every element of $S=X\cup Y$ has primal type $u$ in
$\widetilde D$. Since $y\in Y\backslash Y_0$, we have
\(y\notin T.\)
By Lemma~\ref{lem5}, $y$ does not have primal type $t$ in \(\widetilde D^{\ast|T}.\) Since 
\[D_0= (\widetilde D^{\ast|T}){}^{\times|(X\backslash X_0)\cup Y_0},\] 
and $y\in Y\backslash Y_0$ implies $y\notin (X\backslash X_0)\cup Y_0$, the loop
complementations that produce $D_0$ do not involve $y$. By Lemma~\ref{lem4}(b),
such loop complementations do not change the primal type of $y$. Therefore, $y$ does
not have primal type $t$ in $D_0$.

\medskip
\textbf{Step 4:} the dual type of $y$ in $D_0$ is not $u$.

We again consider $D^\ast$ and use
\(\widehat D=(D^\ast)^{\times|Y}.\)
As before, every element of $S=X\cup Y$ has primal type $u$ in $\widehat D$.
Since $y\notin T$, Lemma~\ref{lem5} implies that $y$ does not have
primal type $t$ in \(\widehat D^{\ast|T}.\)
From the identity
\[
D_0^\ast = (\widehat D^{\ast|T})^{\times|X_0\cup (Y\backslash Y_0)},
\]
we see that the loop complementation acts on the subset $X_0\cup (Y\backslash Y_0)$,
which contains $y$. As the primal type of $y$ in
$\widehat D^{\ast|T}$ is either $p$ or $u$, Lemma~\ref{lem4}(a) implies that its
primal type in $D_0^\ast$ is either $p$ or $t$. Equivalently, the dual type of $y$
in $D_0$ is not $u$.

Combining Steps 3 and 4, the primal type of $y$ in $D_0$ lies in $\{p,u\}$, and
the dual type of $y$ lies in $\{p,t\}$. Hence, the type of $y$ in $D_0$ is one of \( \{pp, pt, up, ut\}.\)
Consulting Table \ref{ta 1} for $\ast \times  $, the corresponding width
changes are
\( \{2, 0, 0, -2\}. \)
Thus, \(w(D_0^{\ast \times | y})-w(D_0)\) is even. This proves statement (ii), and the lemma follows.
\end{proof}

\section{Main Theorems}\label{4}
For a delta-matroid $D$, define the width sets
\[
W_{\ast\times}(D)=\{w(D^{\ast\times|A})\mid A\subseteq E(D)\},
\quad
W_{\times\ast}(D)=\{w(D^{\times\ast|A})\mid A\subseteq E(D)\}.
\]

\begin{theorem}\label{thm1}
Let $D$ be a binary delta-matroid. Suppose
\(k,k+1\in W_{\ast\times}(D).\)
Then for every integer $m$ satisfying
\[
k+2\leq m\leq \max W_{\ast\times}(D),
\]
we have \(m\in W_{\ast\times}(D).\)
\end{theorem}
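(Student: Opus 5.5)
Throughout, set $M=\max W_{\ast\times}(D)$. We argue by contradiction: suppose some integer $m$ with $k+2\le m\le M$ is missing from $W_{\ast\times}(D)$, and choose the \emph{least} such $m$. Then $m-1,m-2\in W_{\ast\times}(D)$; this holds for $m=k+2$ by hypothesis and for larger $m$ by minimality. Hence there are realized widths of both parities just below $m$, and also a realized width $>m$. The basic observation is that $\ast\times$ generates a cyclic group of order $3$ on each element, with $(\ast\times)^{-1}=\times\ast$. So every partial-$\ast\times$ dual $D_0=D^{\ast\times|A}$ has, for each $a\in A$, the move $D_0^{\times\ast|a}=D^{\ast\times|A\backslash a}$, and for each $e\notin A$, the move $D_0^{\ast\times|e}=D^{\ast\times|A\cup e}$, and both stay inside the family counted by $W_{\ast\times}(D)$. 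Connecting a set realizing width $<m$ to one realizing width $>m$ by adding and removing single elements, Table~\ref{ta 1} shows each step changes the width by at most $2$. Therefore somewhere we must meet a $D_0=D^{\ast\times|A}$ with $w(D_0)=m-1$ in which no single move produces width $m$.

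The next step is to read off the consequences from Table~\ref{ta 1}. In such a $D_0$:
\begin{itemize}
\item no $e\notin A$ has type $pu$ or $tp$, since these give $+1$ under $\ast\times$;
\item no $a\in A$ has type $up$ or $pt$, since these give $+1$ under $\times\ast$.
\end{itemize}
Moreover, since width $>m$ is still reachable, some move must give $+2$. Under either operation this comes only from an element of type $pp$. I will then try to show that this $+2$ structure forces the odd-changing elements into a rigid configuration. Under $\ast\times$ the odd changes are $uu,tp,pu,tt$, and under $\times\ast$ they are $uu,up,pt,tt$.

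The heart of the proof, and the step I expect to be the main obstacle, is to reorganize so that Lemma~\ref{lem6} applies. Concretely, I would pass to the partial dual at a suitable realizing set and identify sets $X,Y$ whose elements have types $tu$ and $ut$ respectively. These are exactly the types whose width changes are $-2$ under $\times\ast$ and $\ast\times$. The first attempt is to take a configuration realizing width $m-1$ or $m-2$ with $|A|$ extremal, and show via the exclusions above (and possibly minimality of $m$ applied repeatedly) that every element still able to change the width by an odd amount is either of type $tu$ or $ut$, or is already blocked. Lemma~\ref{lem6} then says that after any further partial $\times\ast$ on $X_0\subseteq X$ and $\ast\times$ on $Y_0\subseteq Y$, the untouched elements of $X$ and $Y$ still change the width by an even amount. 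Hence all configurations reachable from there have widths of a single parity.

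Finally I would combine the two parities: $m-1$ and $m-2$ are both realized, and the widths $\ge m-1$ reached along the path all share one parity. This should contradict $m\notin W_{\ast\times}(D)$, since an odd step from the width-$(m-1)$ or width-$(m-2)$ level upward, through a $pu/tp$ or $up/pt$ element, would land on $m$. The delicate point is to ensure that the $pp$ moves used to climb do not destroy the $tu/ut$ structure. For this I would rely on Lemma~\ref{lem4}(b) and Lemma~\ref{lem1}, which guarantee that operations at other elements leave primal types unchanged, together with dualization as in Steps~2 and~4 of Lemma~\ref{lem6}.
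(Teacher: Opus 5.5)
Your reduction to a least missing $m$ (so $m-1,m-2\in W_{\ast\times}(D)$ and $M>m$) is fine, your readings of Table~\ref{ta 1} are correct, and Lemma~\ref{lem6} is the right tool. But the step you flag as the ``main obstacle'' is the whole content of the proof, and your plan for it does not work.

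\begin{itemize}
\item Choosing a configuration with $|A|$ extremal gives no control over element types.
\item Your target claim, that every element still able to change the width by an odd amount has type $tu$ or $ut$, is confused. By Table~\ref{ta 1}, a $tu$ element changes the width by $0$ under $\ast\times$ and by $-2$ under $\times\ast$. A $ut$ element changes it by $-2$ under $\ast\times$ and $0$ under $\times\ast$. So neither type ever gives an odd change.
\item Lemma~\ref{lem6} needs the $tu$ elements to be exactly those that will receive $\times\ast$ (inside the current realizing set) and the $ut$ elements exactly those that will receive $\ast\times$ (outside it). You never specify $X$ and $Y$.
\item Your crossing pair (width $m-1$ next to width $m+1$) is the wrong pair for a parity argument, since the widths differ by $2$.
\item Your endgame (``an odd step would land on $m$'') does not produce a contradiction.
\end{itemize}

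The missing idea is a distance-minimal choice between two configurations whose widths differ by $3$.

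\begin{enumerate}
\item Fix $A$ with $w(D^{\ast\times|A})=m-2$. Walk from $A$ toward a set of width $>m$ one element at a time. The first width exceeding $m-1$ is $m$ or $m+1$, hence $m+1$, so $m+1\in W_{\ast\times}(D)$.
\item Among all $B$ with $w(D^{\ast\times|B})=m+1$, take one minimizing $|A\triangle B|$. Set $Q=D^{\ast\times|B}$, $X=B\backslash A$, $Y=A\backslash B$.
\item For $e\in A\triangle B$, consider the width of $D^{\ast\times|B\triangle e}$. It is not $m$, since $m$ is missing. It is not $m+1$, by minimality. It is not $\geq m+2$, since otherwise a shortest path from $A$ to $B\triangle e$ would pass through width $m+1$ strictly closer to $A$.
\item So that width is $m-1$, a change of $-2$. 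This forces type $tu$ in $Q$ for $e\in X$ (there $D^{\ast\times|B\triangle e}=Q^{\times\ast|e}$). It forces type $ut$ for $e\in Y$ (there it equals $Q^{\ast\times|e}$).
\item Lemma~\ref{lem6} now says every step from $Q$ back to $D^{\ast\times|A}$ changes the width by an even amount. Here you apply $\times\ast$ on $X$ and $\ast\times$ on $Y$, in any order. This contradicts $w(Q)-w(D^{\ast\times|A})=3$.
\end{enumerate}

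No $pp$ climbing moves are involved, so the preservation issue you raise at the end does not arise.
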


\begin{proof}
Let \(M=\max W_{\ast\times}(D).\) If $M\leq k+1$, there is nothing to prove. Assume $M\geq k+2$. 
It suffices to show $k+2\in W_{\ast\times}(D)$. Once $k+2$ is obtained, applying the same reasoning to the consecutive pair $k+1,k+2$ yields $k+3$, and proceeding inductively, we obtain all integers up to $M$.

Choose $A\subseteq E(D)$ such that
\(w(D^{\ast \times |  A})=k.\)
Suppose, for contradiction, that \(k+2\notin W_{\ast\times}(D).\)
Since $M\geq k+2$, there exists $C\subseteq E(D)$ with
\(w(D^{\ast \times |  C})\geq k+3.\)
Move from $A$ to $C$ by changing one element at a time. At each step the width changes
by at most $2$. Since the width $k+2$ is forbidden, the sequence of widths must attain
the value $k+3$. Hence, there exists $B\subseteq E(D)$ such that
$w(D^{\ast \times |  B})=k+3.$
Choose such a set $B$ so that \(|A\triangle B|\) is minimal. Set $Q=D^{\ast \times |  B}$ and let \[X=B\backslash A \quad \text{and} \quad Y=A\backslash B.\]
We first determine the types of elements of $X$ and $Y$ in $Q$. Let $e\in X$. Then $e\in B$ but $e\notin A$. We have 
\[
D^{\ast \times |  B\triangle e}= (D^{\ast \times |  B}){}^{\times \ast | e}= Q^{\times \ast | e}.
\]
Moreover,
\[
|A\triangle(B\triangle e)|<|A\triangle B|.
\]
By the minimality of \(|A\triangle B|\), we have
\[
w(Q^{\times \ast | e})\neq k+3.
\]
By our contradiction assumption, we also have
\[
w(Q^{\times \ast | e})\neq k+2.
\]

Since a single $\times\ast$-operation changes width by at most $2$, and $w(Q)=k+3$, the only
possible values for $w(Q^{\times \ast | e})$ are
\[
\{k+1,\ k+2,\ k+3,\ k+4,\ k+5\}.
\]
The values $k+2$ and $k+3$ have just been excluded.
Suppose that \(w(Q^{\times \ast | e})\geq k+4.\) 
Let 
\[A\triangle(B\triangle e)=\{e_1,e_2,\dots, e_m\}.\]
Since a single $\ast\times$-operation or $\times\ast$-operation changes width by at most $2$ and \(w(D^{\ast \times |  A})=k; w(D^{\ast \times |  B\triangle e})= w(D^{\ast \times |  A\triangle \{e_1,e_2,\dots, e_m\}})\geq k+ 4\),
\[k+3\in \{w(D^{\ast \times|A}),w(D^{\ast \times|A\triangle e_1}),w(D^{\ast \times |  A\triangle \{e_1,e_2\}}), \dots, w(D^{\ast \times |  A\triangle \{e_1,e_2,\dots, e_m\}})\}.\]
This means that there exists a subset $B'$ such that
\[
w(D^{\ast \times |  B'})=k+3
\]
and
\[
|A\triangle B'|<|A\triangle B|,
\]
contradicting the minimality of \(|A\triangle B|\). Thus, $w(Q^{\times\ast|e})$ cannot be $k+2$, $k+3$, $k+4$, or $k+5$. The only remaining possibility is $k+1$. Therefore,
\[
w(Q^{\times\ast|e}) = k+1.
\]
Hence,
\[
w(Q^{\times \ast |  e})-w(Q)=-2.
\]
By Table \ref{ta 1}, for $\times\ast $ the width change $-2$ occurs only for type $tu$. Thus, every element of $X$ has type $tu$ in $Q$.

Now let $e\in Y$. Then $e\notin B$ but $e\in A$. Then 
\[
D^{\ast \times |  B\triangle e}=(D^{\ast \times |B}) {}^{\ast \times |  e}=Q^{\ast \times |  e}.
\]
The same argument as above gives
\[
w(Q^{\ast \times |  e})=k+1.
\]
Hence,
\[
w(Q^{\ast \times |  e})-w(Q)=-2.
\]
By Table \ref{ta 1}, for $\ast \times $ the width change $-2$ occurs only for
type $ut$. Thus, every element of $Y$ has type $ut$ in $Q$.

Consequently, we have shown that if \(x\in X\), then \(x\) has type \(tu\) in \(Q\), while if \(y\in Y\), then \(y\) has type \(ut\) in \(Q.\)

Now start from $Q=D^{\ast \times |  B}$ and transform it into $D^{\ast \times |  A}$. To do this, apply \(\times\ast \) to every element of $X=B\backslash A$, and apply \(\ast \times  \) to every element of $Y=A\backslash B$. Choose any order for these operations. At any intermediate stage, suppose that the
already processed subsets are
\(X_0\subseteq X,\) and \(Y_0\subseteq Y.\) The current binary delta-matroid is \( (Q^{\times \ast | X_0}){}^{\ast \times |  Y_0}.\)
If the next element is in $X\backslash X_0$, then by Lemma~\ref{lem6}, applying $\times\ast$ changes the width by an even number.
If the next element is in $Y\backslash Y_0$, the same
lemma says that applying $\ast\times$ changes the width by an even number.

Therefore, every step in the transformation from $Q$ to $D^{\ast \times |  A}$ changes the width
by an even number. Hence, the total width change must be even. But
\[
w(D^{\ast \times |  A})-w(Q)
=
k-(k+3)
=
-3,
\]
which is odd. This contradiction shows that our assumption was false. Therefore,
\[
k+2\in W_{\ast\times}(D).
\]

As explained at the beginning of the proof, by induction we obtain every integer \(m\) with \(k+2\leq m\leq \max W_{\ast\times}(D).\)
The theorem follows.
\end{proof}

The $\times\ast $ case follows from the $\ast \times$ case by duality.

\begin{lemma}[\cite{YanJin2024}]\label{lem7}
Let $D=(E, \F)$ be a set system. Then for any $\bullet\in \{\ast,\times,\ast\times,\times\ast,\ast\times\ast\}$,
$$^{\partial}w_{D}^{\ast\bullet\ast}(z)={^{\partial}w_{D^{\ast}}^{\bullet}(z)}.$$
\end{lemma}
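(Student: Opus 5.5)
The identity should follow by pulling the outer twists $\ast$ out of the partial operation and then using that width is invariant under full duality. I would first establish, for a set system $D$, $A\subseteq E$ and any word $\bullet$, the identity
\[
D^{\ast\bullet\ast|A}=\bigl((D^{\ast})^{\bullet|A}\bigr)^{\ast|E\backslash A}.
\]
To get it, write $D^{\ast\bullet\ast|A}=((D^{\ast|A})^{\bullet|A})^{\ast|A}$ and split $D^{\ast|A}=(D^{\ast})^{\ast|E\backslash A}$, since twists on disjoint sets compose by symmetric difference and $\ast^2$ is trivial. Then $(D^{\ast|A})^{\bullet|A}=((D^{\ast})^{\bullet|A})^{\ast|E\backslash A}$, because operations applied at different elements commute. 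For twists on distinct elements this is clear. Twist at $f$ and loop complementation at $e\neq f$ also commute, since the $\times|e$ operation only toggles membership of $e$ in sets determined by the $e$-free part, and twisting at $f$ acts on coordinate $f$ only. Applying the outer $\ast|A$ afterwards, the twists on $A$ and on $E\backslash A$ combine to a full twist $\ast|E$:
\[
D^{\ast\bullet\ast|A}=\bigl(((D^{\ast})^{\bullet|A})^{\ast|E\backslash A}\bigr)^{\ast|A}=\bigl((D^{\ast})^{\bullet|A}\bigr)^{\ast|E}.
\]
This is actually a cleaner form than the one I wrote at the start: it is just the full dual of $(D^{\ast})^{\bullet|A}$.

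Next I would observe that $w(D'^{\ast})=w(D')$ for any proper set system $D'$. Complementing every feasible set sends $|F|$ to $|E|-|F|$, so $r_{\max}$ and $r_{\min}$ swap roles and their difference is unchanged.

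Combining the two steps gives $w(D^{\ast\bullet\ast|A})=w((D^{\ast})^{\bullet|A})$ for every $A\subseteq E$. Summing $z$ to this power over all $A\subseteq E$ yields the lemma.

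I expect the main obstacle to be the commutation bookkeeping in the first step: checking that loop complementation at an element commutes with twists at other elements, and that the sequential word $\ast\bullet\ast$ applied elementwise really rearranges as claimed. Since $\bullet$ is a word in $\ast,\times$, I would first verify the single-element case: at a fixed element $e$, applying $\ast\bullet\ast$ is the same as starting from the already-twisted system and applying $\bullet$ then $\ast$. Different elements can then be handled independently because all the operations are local to their coordinate. Properness is preserved throughout, so the widths are defined.
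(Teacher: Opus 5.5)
Your proof is correct. The paper gives no proof of this lemma; it is quoted from Yan and Jin, so there is no argument of the paper's to compare against, and your reduction is the natural one. Single-element twists and loop complementations at distinct elements commute. Indeed, for $f\neq e$, a set $X\ni e$ is feasible in $D^{\times|e}$ if and only if exactly one of $X$, $X\backslash e$ is feasible in $D$. The map $X\mapsto X\triangle f$ commutes with $X\mapsto X\backslash e$ and preserves whether $e\in X$. This gives
\[
D^{\ast\bullet\ast|A}=\bigl(((D^{\ast})^{\ast|E\backslash A})^{\bullet|A}\bigr)^{\ast|A}=\bigl((D^{\ast})^{\bullet|A}\bigr)^{\ast|E}.
\]
Then $w(D'^{\ast})=w(D')$ gives the equality term by term, for arbitrary proper set systems.

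One correction to the write-up. The identity you display first, $D^{\ast\bullet\ast|A}=((D^{\ast})^{\bullet|A})^{\ast|E\backslash A}$, is not a less clean form of the second one; it is false. Its right-hand side equals $(D^{\ast|A})^{\bullet|A}$, so it drops the final $\ast|A$. For $\bullet=\ast$ it would assert $D^{\ast|A}=D$. For $D=(\{1,2\},\{\emptyset,\{1,2\}\})$ and $A=\{1\}$, the two sides even have different widths ($0$ versus $2$). Delete it and state only the full-dual form, which is what your computation actually proves and uses.

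Also, $\times|e$ does not toggle membership of $e$ in sets. What it toggles is the feasibility of $F\cup e$ for each feasible $F$ with $e\notin F$.
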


\begin{theorem}\label{thm2}
Let $D$ be a binary delta-matroid. Suppose
\(k,k+1\in W_{\times\ast}(D).\)
Then for every integer $m$ satisfying
\[
k+2\leq m\leq \max W_{\times\ast}(D),
\]
we have \(m\in W_{\times\ast}(D).\)
\end{theorem}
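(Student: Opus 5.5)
The plan is to deduce Theorem~\ref{thm2} from Theorem~\ref{thm1} by passing to the dual, exactly as the remark after Theorem~\ref{thm1} suggests. The key point is that the width sets for $\times\ast$ on $D$ coincide with the width sets for $\ast\times$ on $D^\ast$.

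\textbf{Step 1: relate the two width sets.} For any $A\subseteq E$, the word $\times\ast$ applied at $A$ can be rewritten by conjugating with the global twist. Concretely, $D^{\ast|E}=D^\ast$, and twists on disjoint parts commute with loop complementation on other elements. Hence
\[
D^{\times\ast|A}=\bigl((D^\ast)^{\ast\times|A}\bigr)^{\ast|E}.
\]
This is the elementwise identity $\times\ast=\ast(\ast\times)\ast$ for elements of $A$. For elements outside $A$ it reduces to $\ast\ast$, which is the identity. I will verify this elementwise using that operations on distinct elements commute, which is also the identity underlying Lemma~\ref{lem7}.

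Width is invariant under full duality, since $|F|\mapsto|E|-|F|$ swaps $r_{\max}$ and $r_{\min}$. Therefore $w(D^{\times\ast|A})=w((D^\ast)^{\ast\times|A})$ for every $A$, and so
\[
W_{\times\ast}(D)=W_{\ast\times}(D^\ast).
\]
Alternatively, I can read this off at the level of polynomials from Lemma~\ref{lem7} with $\bullet=\ast\times$. This gives ${}^{\partial}w_D^{\ast\ast\times\ast}={}^{\partial}w_D^{\times\ast}={}^{\partial}w_{D^\ast}^{\ast\times}$, and the width sets are exactly the exponent sets with nonzero coefficients.

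\textbf{Step 2: apply Theorem~\ref{thm1}.} The dual $D^\ast$ is a twist of $D$. Binary delta-matroids are closed under twists, so $D^\ast$ is binary. By Step 1, the hypothesis $k,k+1\in W_{\times\ast}(D)$ becomes $k,k+1\in W_{\ast\times}(D^\ast)$. Theorem~\ref{thm1} applied to $D^\ast$ then gives every $m$ with $k+2\le m\le \max W_{\ast\times}(D^\ast)=\max W_{\times\ast}(D)$.

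The only point needing care is the word identity in Step 1, in particular getting the order of operations right given the convention $D^{\ast\times|A}=(D^{\ast|A})^{\times|A}$. Citing Lemma~\ref{lem7} avoids redoing this check.
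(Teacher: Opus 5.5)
Your proposal is correct and follows the paper's proof. The paper also uses Lemma~\ref{lem7} with $\ast(\ast\times)\ast=\times\ast$ to get $W_{\times\ast}(D)=W_{\ast\times}(D^\ast)$, notes that $D^\ast$ is binary, and applies Theorem~\ref{thm1} to $D^\ast$; your elementwise check of $D^{\times\ast|A}=((D^\ast)^{\ast\times|A})^{\ast|E}$, combined with $w(D^\ast)=w(D)$, is simply a self-contained verification of the identity behind that lemma.
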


\begin{proof}
Since $\ast(\ast\times)\ast=\times\ast$, Lemma~\ref{lem7} gives
\[
^{\partial}w_{D}^{\times\ast}(z) = {^{\partial}w_{D^\ast}^{\ast\times}(z)}.
\]
Thus, $W_{\times\ast}(D)=W_{\ast\times}(D^\ast)$. Since $D$ is binary, its dual
$D^\ast$ is also binary. Then the result follows immediately from
Theorem~\ref{thm1} applied to $D^\ast$.
\end{proof}

We now prove the main interpolation theorem.

\begin{proof}[Proof of Theorem \ref{thm}]
For $\bullet = \ast$, this is the twist polynomial interpolation theorem for binary delta-matroids given in \cite{ZhaoYan2026}.

For $\bullet=\times$ and $\bullet=\ast \times  \ast $, Table \ref{ta 1} shows that applying either $\times$ or $\ast\times\ast$ to a single element changes the width by at most $1$. Thus, both ${^{\partial}w_{D}^{\times}(z)}$ and ${^{\partial}w_{D}^{\ast \times  \ast }(z)}$ are interpolating.

It remains to handle \(\bullet=\ast \times  \) and 
\(\bullet=\times\ast .\)
We treat both cases simultaneously. Let
\[
W_\bullet(D)
=
\{w(D^{\bullet|A})\mid A \subseteq E(D)\}.
\]
List its elements in increasing order:
\[
w_1<w_2<\cdots<w_s.
\]
By Table \ref{ta 1}, changing one element changes the width by at most $2$.
Therefore, for every $i$,
\[
w_{i+1}-w_i\leq 2.
\]
If all elements of $W_\bullet(D)$ have the same parity, then
${^{\partial}w_{D}^{\bullet}(z)}$ is either an even polynomial or an odd polynomial.

Now suppose that $W_\bullet(D)$ contains both parities. Then there exists a smallest index
$i$ such that $w_i$ and $w_{i+1}$ have opposite parity. Since
\[
1\leq w_{i+1}-w_i\leq 2
\]
and the difference is odd, we must have
\[
w_{i+1}=w_i+1.
\]

If $\bullet=\ast\times$, Theorem~\ref{thm1} implies that every integer from $w_i+2$ up
to $\max W_{\ast\times}(D)$ belongs to $W_{\ast\times}(D)$. Hence, every integer from $w_i$ to the maximum
belongs to $W_{\ast\times}(D)$.

If $\bullet=\times\ast$, the same conclusion follows from Theorem~\ref{thm2}.

Thus, ${^{\partial}w_{D}^{\bullet}(z)}$ is both even-interpolating and odd-interpolating.
The theorem follows.
\end{proof}

\section{Application to ribbon graphs}\label{5}
We now transfer the results on delta-matroids to ribbon graphs.

\subsection{Ribbon Graphs}
	A ribbon graph arises naturally from a cellularly embedded graph by taking a small neighborhood for each vertex and edge, and is formally defined as follows:
	\begin{definition}[\cite{Bollobas2002}]
		\normalfont
		A \emph{ribbon graph} \( G = (V(G), E(G)) \) is a surface with boundary, represented as the union of two sets of topological discs: a set \( V(G) \) of vertices and a set \( E(G) \) of edges, satisfying the following properties:
		\begin{enumerate}[label=\textup{(\alph*)}]
			\item  The vertices and edges intersect in disjoint line segments.
			\item   Each such line segment lies on the boundary of exactly one vertex and exactly one edge.
			\item   Every edge contains exactly two such line segments.
		\end{enumerate}
	\end{definition}

Let \( G \) be a ribbon graph. A \emph{ribbon subgraph} of \( G \) is obtained by deleting some vertices and edges from \( G \). A \emph{spanning ribbon subgraph} is obtained by deleting edges only. A \emph{quasi-tree} \( Q \) is a connected ribbon graph with exactly one boundary component. For a connected ribbon graph \( G \), a \emph{spanning quasi-tree} \( Q \) of \( G \) is a spanning ribbon subgraph with exactly one boundary component. For a disconnected ribbon graph $G$, we say that a ribbon graph $Q$ is a spanning quasi-tree of $G$ if $k(Q) = k(G)$ (where $k(G)$ denotes the number of connected components of $G$) and each connected component of $Q$ is a spanning quasi-tree of the corresponding connected component of $G$.

\subsection{Partial dual, partial Petrial and partial-twualities}

The partial dual and the partial Petrial of a ribbon graph $G$ with respect to a subset $A\subseteq E(G)$ are defined as follows.
\begin{definition} [\cite{Chmutov2009}]
\normalfont
For a ribbon graph $G$ and $A\subseteq E(G)$, the partial dual $G^{\delta|A}$ of $G$ with respect to $A$ is a ribbon graph obtained from $G$ by gluing a disc to $G$ along each boundary component of the spanning ribbon subgraph $(V(G), A)$ (such discs will be the vertex-discs of $G^{\delta|A}$), removing the interiors of all the vertex-discs of $G$ and keeping its edge-ribbons unchanged. 
\end{definition}
In particular, the geometric dual of $G$, denoted $G^*$, is the partial dual with respect to $E(G)$, that is, $G^* = G^{\delta| E(G)}$.

\begin{definition} [\cite{EllisMonaghanMoffatt2012}]
\normalfont
For a ribbon graph $G$ and $A\subseteq E(G)$, the partial Petrial $G^{\tau|A}$ of $G$ with respect to $A$ is a ribbon graph obtained from $G$ by adding a half-twist to each of the edges in $A$.
\end{definition}
Let
\[
    \mathcal{R}=\langle \delta,\tau \mid \delta^2,\tau^2,(\delta\tau)^3\rangle
\]
be the ribbon group generated by partial duality $\delta$ and partial Petriality
$\tau$ (see \cite{Wilson1979}). Its five non-identity twuality operations are
\( \{\delta,\tau,\delta\tau,\tau\delta, \delta\tau\delta\}.\)
On the set system side, twist and loop complementation generate the group
\[
    \mathcal{B}=\langle \ast,\times \mid \ast^2,\times^2,(\ast \times  )^3\rangle .
\]
We use the standard isomorphism
\[
    \eta:\mathcal{R}\longrightarrow \mathcal{B},
    \qquad
    \eta(\delta)=\ast,\qquad
    \eta(\tau)=\times.
\]
Thus,
$\eta(\delta)=\ast,
\eta(\tau)=\times,
\eta(\delta\tau)=\ast \times,\eta(\tau\delta)=\times\ast,\eta(\delta\tau\delta)=\ast\times\ast.$

\subsection{ Ribbon-graphic delta-matroid and partial-$\circ$ polynomial}
Let $G = (V(G), E(G))$ be a ribbon graph, and define
\[
\F(G) := \{ F \subseteq E(G) \mid F \text{ is the edge set of a spanning quasi-tree of } G \}.
\]
Then the set system $D(G) := (E(G), \F(G))$ is a binary delta-matroid \cite{ChunMoffattNobleRueckriemen2019}.

For any twuality operation $\circ \in \{\delta,\tau,\delta\tau,\tau\delta,\delta\tau\delta\}$, 
we have the following standard compatibilities (see \cite{ChunMoffattNobleRueckriemen2019, Chun2019F}):
\[
    D(G^{\circ|A})
    =
    D(G)^{\eta(\circ)|A}
\]for every $A\subseteq E(G)$,
and
\[
    \varepsilon(G)=w(D(G)),
\]
where $\varepsilon(G)$ denotes the Euler genus of $G$.
\begin{definition}[\cite{GrossMansourTucker2021}]
For a ribbon graph $G$ and
\(
    \circ\in\{\delta,\tau,\delta\tau,\tau\delta,\delta\tau\delta\},
\)
the partial-$\circ$ polynomial is
\[
   {^{\partial}\varepsilon_{G}^{\circ}(z)}
    =\sum_{A\subseteq E(G)} z^{\varepsilon(G^{\circ|A})}.
\]
\end{definition}

\begin{proposition}[\cite{YanJin2024}]\label{pro01}
Let $G=(V(G), E(G))$ be a ribbon graph and \(
    \circ\in\{\delta,\tau,\delta\tau,\tau\delta,\delta\tau\delta\}
\). Then $$^{\partial}w_{D(G)}^{\eta(\circ)}(z)={^{\partial}\varepsilon_{G}^{\circ}(z)}.$$
\end{proposition}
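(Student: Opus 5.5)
The plan is to turn the polynomial identity into an equality of multisets of integers, using the two compatibilities recalled just before the statement. Write $D=D(G)$ and fix $\circ\in\{\delta,\tau,\delta\tau,\tau\delta,\delta\tau\delta\}$. By definition,
\[
^{\partial}\varepsilon_{G}^{\circ}(z)=\sum_{A\subseteq E(G)} z^{\varepsilon(G^{\circ|A})},
\qquad
^{\partial}w_{D}^{\eta(\circ)}(z)=\sum_{A\subseteq E(G)} z^{w(D^{\eta(\circ)|A})}.
\]
Both sums are indexed by the same set of subsets $A\subseteq E(G)=E(D)$. It therefore suffices to show, for each fixed $A$, that
\[
\varepsilon(G^{\circ|A})=w(D^{\eta(\circ)|A}).
\]

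The key steps, in order, are as follows.

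\begin{enumerate}
\item Note that $G^{\circ|A}$ is again a ribbon graph. Applying $\varepsilon(H)=w(D(H))$ with $H=G^{\circ|A}$ gives $\varepsilon(G^{\circ|A})=w(D(G^{\circ|A}))$.
\item Apply the compatibility $D(G^{\circ|A})=D(G)^{\eta(\circ)|A}$. This yields $w(D(G^{\circ|A}))=w(D^{\eta(\circ)|A})$.
\item Chaining these two equalities gives the identity termwise. Summing $z^{(\cdot)}$ over all $A$ then proves the proposition.
\end{enumerate}

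The only point needing care is that the compatibility holds for the composite operations $\delta\tau$, $\tau\delta$ and $\delta\tau\delta$, where $G^{\circ|A}$ means the factors are applied sequentially to $A$. I would check this by induction on word length. For instance, for $\delta\tau$,
\[
D(G^{\delta\tau|A})=D\bigl((G^{\delta|A})^{\tau|A}\bigr)=D(G^{\delta|A})^{\times|A}=(D^{\ast|A})^{\times|A}=D^{\ast\times|A}.
\]
Here the middle two equalities use the single-generator cases $D(H^{\delta|A})=D(H)^{\ast|A}$ and $D(H^{\tau|A})=D(H)^{\times|A}$, applied to the intermediate ribbon graphs. This step is where the real content lies, in the cited results of Chun et al.; since the paper states it as standard, it can be taken as given.
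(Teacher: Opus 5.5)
Your proof is correct and takes essentially the route the paper intends. The paper cites this proposition without reproducing a proof, and it follows termwise from the two compatibilities $D(G^{\circ|A})=D(G)^{\eta(\circ)|A}$ and $\varepsilon(H)=w(D(H))$ stated just before it; your reduction of the composite words to the generators $\delta,\tau$ is a harmless extra check, since the paper takes the compatibility as standard for all five operations under the same sequential convention you use.
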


\subsection{The interpolation theorem for ribbon graphs}
We first state the corresponding Theorem \ref{thm1} for ribbon graphs. For
\(
    \circ\in\{\delta,\tau,\delta\tau,\tau\delta,\delta\tau\delta\},
\)
define the partial-$\circ$ Euler-genus set of $G$ by
\[
    \Gamma_{\circ}(G)
    =
    \{\varepsilon(G^{\circ|A})\mid A\subseteq E(G)\}.
\]
Equivalently,
\[
    \Gamma_{\circ}(G)
    =
    \{w(D(G)^{\eta(\circ)|A})\mid A\subseteq E(G)\}.
\]

\begin{corollary}\label{cor:ribbon-hole-deltatau}
Let $G$ be a ribbon graph and \(\circ \in \{\delta,\tau,\delta\tau,\tau\delta,\delta\tau\delta\}\). If
\(
    k,k+1\in \Gamma_{\circ}(G),
\)
then for every integer \(m \) with
\[
    k+2\le m\le \max \Gamma_{\circ}(G),
\]
we have
\(
    m\in \Gamma_{\circ}(G).
\)
\end{corollary}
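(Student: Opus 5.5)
The plan is to transfer everything to the delta-matroid $D(G)$ and argue separately for each of the five operations. By the compatibility $D(G^{\circ|A})=D(G)^{\eta(\circ)|A}$ and $\varepsilon(G)=w(D(G))$, we have
\[
\Gamma_{\circ}(G)=W_{\eta(\circ)}(D(G)),
\]
where $W_\bullet(D)=\{w(D^{\bullet|A})\mid A\subseteq E(D)\}$ as in the proof of Theorem~\ref{thm}. Moreover, $D(G)$ is a binary delta-matroid by \cite{ChunMoffattNobleRueckriemen2019}. It therefore suffices to show the following. If $D$ is binary, $\bullet\in\{\ast,\times,\ast\times,\times\ast,\ast\times\ast\}$, and $k,k+1\in W_\bullet(D)$, then every integer $m$ with $k+2\le m\le\max W_\bullet(D)$ lies in $W_\bullet(D)$.

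For $\circ=\delta\tau$ and $\circ=\tau\delta$ we have $\eta(\circ)=\ast\times$ and $\times\ast$ respectively. In these cases the claim is exactly Theorem~\ref{thm1} and Theorem~\ref{thm2} applied to $D(G)$.

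For $\circ=\tau$ and $\circ=\delta\tau\delta$, Table~\ref{ta 1} shows that a single-element $\times$ or $\ast\times\ast$ operation changes the width by at most $1$. Take $B$ with $w(D^{\bullet|B})=\max W_\bullet(D)$ and $A$ with $w(D^{\bullet|A})=k+1$. Walk from $A$ to $B$ one element at a time. Each step is a single-element application of $\bullet$, because these operations are involutions on each element. The widths along the walk therefore change by at most $1$ per step, so they hit every integer between $k+1$ and the maximum. This gives every $m$ in the required range.

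For $\circ=\delta$ we have $\eta(\circ)=\ast$, and here I would invoke the twist-polynomial result of \cite{ZhaoYan2026}. Its precise form is that, for binary $D$, the width set is either of a single parity or contains consecutive values only as part of a complete run up to the maximum. If the paper of Zhao and Yan states only the polynomial dichotomy (even, odd, or both even- and odd-interpolating), I would derive the run from it as follows. If $k,k+1\in W_\ast(D)$, then both parities occur, so the twist polynomial is both even- and odd-interpolating. The even-degree part then fills all even values between its extremes, and the odd-degree part fills all odd values between its extremes. The value $k$ and $k+1$ lie in opposite parity classes, one even and one odd, and the maxima of the two classes differ by exactly $1$, since single-element twists change the width by at most $2$. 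Hence every integer from $k+2$ up to $\max W_\ast(D)$ is attained.

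Alternatively, I would re-run the argument of Theorem~\ref{thm1} verbatim with $\ast$ in place of $\ast\times$. In Table~\ref{ta 1} the width change $-2$ for $\ast$ occurs only for type $uu$. The corresponding parity lemma, an analogue of Lemma~\ref{lem6} for twists, would then follow from Lemma~\ref{lem5} together with the dual version of Lemma~\ref{lem5}. The main obstacle is only this $\delta$ case, specifically making the reduction to \cite{ZhaoYan2026} precise; the other four cases are immediate.
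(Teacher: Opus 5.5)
Your case split, the walk argument for $\tau$ and $\delta\tau\delta$, and the use of Theorems~\ref{thm1} and~\ref{thm2} for $\delta\tau$ and $\tau\delta$ match the paper's proof. For $\circ=\delta$ the paper simply cites Zhao and Yan~\cite{ZhaoYan2026}, as in your primary route.

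The step that fails is your fallback derivation of the run from the polynomial dichotomy alone. You claim that the largest even and largest odd attained widths differ by exactly $1$ \emph{because} single-element twists change the width by at most $2$, and this does not follow. The step bound only says that consecutive attained widths differ by at most $2$. A width set such as $\{1,2,4\}$ satisfies that bound. It is both even- and odd-interpolating (evens $\{2,4\}$, odds $\{1\}$), yet with $k=1$ it misses $3$. So the dichotomy is strictly weaker than the corollary's conclusion and cannot produce it. This is also why the paper derives Theorem~\ref{thm} from Theorem~\ref{thm1}, and not the other way round.

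Your second alternative is sound, and it gives a self-contained proof of the $\delta$ case that the paper does not spell out.
\begin{itemize}
\item Choose $A$, $B$ and $Q=D^{\ast|B}$ as in the proof of Theorem~\ref{thm1}. The same minimality argument gives $w(Q^{\ast|e})=w(Q)-2$ for every $e\in A\triangle B$, hence type $uu$ by Table~\ref{ta 1}.
\item Take $T\subseteq A\triangle B$ and $e\in (A\triangle B)\setminus T$. Apply Lemma~\ref{lem5} to $Q$ and to $Q^{\ast}$, both binary as twists of $D$. Using $(Q^{\ast})^{\ast|T}=(Q^{\ast|T})^{\ast}$, this rules out both primal type $t$ and dual type $t$ for $e$ in $Q^{\ast|T}$.
\item So $e$ has type in $\{pp,pu,up,uu\}$, and every twist step from $Q$ to $D^{\ast|A}$ changes the width by an even amount. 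The total change $k-(k+3)=-3$ is odd, which is the contradiction.
\end{itemize}
This is simpler than Lemma~\ref{lem6}, since no loop complementations are interleaved. Drop the dichotomy argument and keep either the citation or this one.
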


\begin{proof}
If \(\circ\in \{\tau, \delta\tau\delta\}\), we have \( \eta(\tau)=\times\) and \( \eta(\delta\tau\delta)=\ast\times\ast \).
Applying either $\times$ or $\ast\times\ast$ to a single element changes the width of $D(G)$ by at most $1$, so
\(
    \Gamma_{\circ}(G) = \{\varepsilon(G^{\circ|A})\mid A\subseteq E(G)\}
\)
constitutes a contiguous set of integers. Hence, the result holds.

If \(\circ=\delta\), the result holds by \cite{ZhaoYan2026}.

If \(\circ\in \{\tau\delta, \delta\tau\}\), since \( \eta(\tau\delta)=\times\ast \) and \( \eta(\delta\tau)=\ast \times  \) and
\(
    \Gamma_{\circ}(G)
    =
    W_{\eta(\circ)}(D(G)),
\)
the result therefore follows directly from Theorems~\ref{thm1} and~\ref{thm2}.
\end{proof}

We now state the ribbon graph counterpart of Theorem \ref{thm}.

\begin{corollary}
\label{cor:ribbon-main}
Let $G$ be a ribbon graph and let
\(
    \circ\in\{\delta,\tau,\delta\tau,\tau\delta,\delta\tau\delta\}.
\)
Then the partial-$\circ$ polynomial
\(^{\partial} \varepsilon_{G}^{\circ}(z)\)
is either an even polynomial, an odd polynomial, or both even-interpolating and
odd-interpolating.
\end{corollary}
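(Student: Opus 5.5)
The plan is to reduce Corollary~\ref{cor:ribbon-main} directly to Theorem~\ref{thm} through the translation between ribbon graphs and delta-matroids. Fix a ribbon graph $G$ and a twuality $\circ\in\{\delta,\tau,\delta\tau,\tau\delta,\delta\tau\delta\}$. Two earlier facts will be used:
\begin{itemize}
\item $D(G)$ is a binary delta-matroid by \cite{ChunMoffattNobleRueckriemen2019};
\item Proposition~\ref{pro01} gives the identity of polynomials ${^{\partial}\varepsilon_{G}^{\circ}(z)}={^{\partial}w_{D(G)}^{\eta(\circ)}(z)}$.
\end{itemize}

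Under the isomorphism $\eta$, the operation $\eta(\circ)$ ranges over exactly $\{\ast,\times,\ast\times,\times\ast,\ast\times\ast\}$ as $\circ$ ranges over the five twualities. Therefore Theorem~\ref{thm} applies to the binary delta-matroid $D(G)$ with $\bullet=\eta(\circ)$. It says that ${^{\partial}w_{D(G)}^{\eta(\circ)}(z)}$ is even, odd, or both even-interpolating and odd-interpolating.

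These three properties depend only on which exponents carry nonzero coefficients. Since the two polynomials are literally equal, each property transfers verbatim to ${^{\partial}\varepsilon_{G}^{\circ}(z)}$.

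Alternatively, one could argue directly from Corollary~\ref{cor:ribbon-hole-deltatau}, mirroring the proof of Theorem~\ref{thm}. List $\Gamma_\circ(G)$ as $w_1<\cdots<w_s$. Consecutive gaps are at most $2$ by Table~\ref{ta 1}. If two parities occur, the first parity change is a step of exactly $1$. Corollary~\ref{cor:ribbon-hole-deltatau} then fills in everything above that step. Everything below it consists of a single parity with gaps of $2$.

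There is no real obstacle. The only point needing care is that Proposition~\ref{pro01} is an equality of full polynomials, not merely of exponent sets. Even so, equality of the exponent sets would already suffice, since the three properties depend only on the support.
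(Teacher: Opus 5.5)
Your proposal is correct and matches the paper's own proof: $D(G)$ is binary, Theorem~\ref{thm} applies with $\bullet=\eta(\circ)$, and Proposition~\ref{pro01} transfers the conclusion verbatim to ${^{\partial}\varepsilon_{G}^{\circ}(z)}$. Your alternative argument through Corollary~\ref{cor:ribbon-hole-deltatau} is also valid; it amounts to rerunning the final part of the proof of Theorem~\ref{thm} on the ribbon-graph side.
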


\begin{proof}
Since $D(G)$ is a binary delta-matroid, Theorem~\ref{thm} applies to
\(D(G)\) and to every operation
\[
    \eta(\circ)\in\{\ast,\times,\ast \times  ,\times\ast ,\ast \times  \ast \}.
\]
Hence, \( ^{\partial} w_{D(G)}^{\eta(\circ)}(z)\)
is either an even polynomial, an odd polynomial, or both even-interpolating and
odd-interpolating. The same conclusion holds for
\(^{\partial} \varepsilon_{G}^{\circ}(z)\) by Proposition \ref{pro01}.
\end{proof}

\section{Non-binary examples}\label{6}

In this section we construct explicit examples showing that the binary hypothesis in
Theorem~\ref{thm} is essential. We give two kinds of obstructions. The
first shows that the twist case $\bullet=\ast$ cannot be extended to arbitrary
delta-matroids.  The second one shows that the same is true for the non-involutive
operations $\ast \times  $ and $\times\ast $.
\begin{lemma}[\cite{BouchetDuchamp1991}]\label{thm4}
A delta-matroid is binary if and only if it has no minor isomorphic to one of the
following delta-matroids:
\begin{itemize}
    \item \(D_1=(\{1,2,3\}, \{\emptyset, \{1,2\},\{2,3\},\{1,3\},\{1,2,3\}\})\);
    \item \(D_2=(\{1,2,3\}, \{\emptyset, \{1\},\{2\},\{3\},\{1,2\},\{1,3\},\{2,3\}\})\);
    \item \(D_3=(\{1,2,3\}, \{\emptyset, \{2\},\{3\},\{1,2\},\{1,3\},\{1,2,3\}\})\);
    \item \(D_4=(\{1,2,3,4\}, \{\emptyset, \{1,2\},\{1,3\},\{1,4\},\{2,3\},\{2,4\},\{3,4\}\})\);
    \item \(D_5=(\{1,2,3,4\}, \{\emptyset, \{1,2\},\{2,3\},\{3,4\},\{1,4\},\{1,2,3,4\}\})\).
\end{itemize}
\end{lemma}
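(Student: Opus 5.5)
This is the excluded-minor characterization of binary delta-matroids due to Bouchet and Duchamp, cited as a known result. The paper quotes it without proof, so the natural plan is to reconstruct their argument in outline. Both directions rest on a single principle. Binarity is preserved by twists, by loop complementations (Brijder--Hoogeboom) and by minors. Conversely, every binary delta-matroid is, after a twist making $\emptyset$ feasible, of the form $D(C)$ for a symmetric matrix $C$ over $\GF(2)$, and this $C$ is uniquely determined by the feasible sets of size at most $2$.

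\textbf{Necessity.} Assume $D$ is binary and suppose some $D_i$ is a minor of $D$. Then $D_i$ would be binary. For each $i$ we check directly that this is impossible. Each $D_i$ contains $\emptyset$, so a representing matrix would have to be the unique $C$ read off from the singletons (diagonal entries) and the feasible pairs (off-diagonal entries via $C_{aa}C_{bb}+C_{ab}^2=1$). One then finds a set of size $3$ or $4$ whose feasibility in $D_i$ disagrees with $\det C[\cdot]$. For example, in $D_1$ the matrix has zero diagonal and all off-diagonal entries equal to $1$. The $3\times 3$ principal minor of this matrix is singular, yet $\{1,2,3\}$ is feasible in $D_1$.

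There is one subtlety. Binarity allows an arbitrary twist first, so we must use the fact that if $D^{\ast|A}\cong D(C)$ and $\emptyset\in\F(D)$, then $D\cong D(C')$ for some symmetric $C'$. This is the pivoting (principal pivot transform) property, and it lets us assume the twist is trivial.

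\textbf{Sufficiency.} Let $D$ be non-binary with no $D_i$-minor, and take $|E|$ minimal. First twist so that $\emptyset\in\F$; the excluded class is closed under twisting up to the listed forms, or we enlarge the check accordingly. Let $C$ be the symmetric matrix determined by the feasible sets of size at most $2$. Then $D\neq D(C)$, so choose a set $S$ of minimum size on which $D$ and $D(C)$ disagree. By minimality of $|E|$, every proper minor is binary, and this forces $E=S$. Contracting or deleting elements and using pivoting to keep $\emptyset$ feasible, we reduce $|S|$ to at most $4$. A finite case analysis of the symmetric exchange axiom on $3$- and $4$-element ground sets then identifies $D$, up to twist, as one of $D_1,\dots,D_5$.

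\textbf{Main obstacle.} I expect the hardest step to be the reduction $|S|\le 4$. It needs a careful argument that a minimal disagreement survives contraction of a pair of elements of $S$ via pivoting. Since this is classical, in the paper I would simply cite \cite{BouchetDuchamp1991}; in Section~\ref{6} the lemma is only used to certify that the constructed examples are non-binary, by exhibiting a $D_i$-minor.
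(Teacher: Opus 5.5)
You have a genuine gap in the sufficiency direction, and it cannot be closed, because the statement as transcribed is false.

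Your necessity direction is fine. Each $D_i$ has $\emptyset$ feasible, so by the pivoting fact any representation is the normal matrix read off from singletons and pairs. The determinant checks you describe do show that $D_1,\dots,D_5$ are non-binary. This is also the only direction the paper uses in Section~6.

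The failure is at the step ``first twist so that $\emptyset\in\F$; the excluded class is closed under twisting up to the listed forms.''
\begin{itemize}
\item The list $D_1,\dots,D_5$ is not closed under twisting up to isomorphism.
\item The hypothesis ``no minor isomorphic to $D_i$'' does not pass from $D$ to $D^{\ast|A}$, because minors of $D^{\ast|A}$ are twists of minors of $D$.
\item Your last step confirms the problem: you conclude that $D$ is, ``up to twist,'' one of the $D_i$. That only gives a minor isomorphic to a twist of some $D_i$, which does not contradict the hypothesis.
\end{itemize}
``Enlarging the check accordingly'' means changing the list of excluded minors, which proves a different theorem.

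Here is a counterexample to the statement as written. Take $D_1^{\ast}=(\{1,2,3\},\{\emptyset,\{1\},\{2\},\{3\},\{1,2,3\}\})$.
\begin{itemize}
\item It is non-binary, since it is a twist of $D_1$. Directly: it is normal, and its forced matrix is the all-ones $3\times 3$ matrix over $\GF(2)$. That matrix is singular, yet $\{1,2,3\}$ is feasible.
\item Its only three-element minor is itself. This is not isomorphic to $D_1$, which has feasible pairs while $D_1^{\ast}$ has none.
\item It is not isomorphic to $D_2$ or $D_3$, which have $7$ and $6$ feasible sets against its $5$.
\item $D_4$ and $D_5$ have four elements, so they cannot be minors.
\end{itemize}

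The theorem of Bouchet and Duchamp actually reads ``no minor isomorphic to a twist of one of $D_1,\dots,D_5$.'' With that correction, your outline has the right shape: pivot to normal form, take a minimal counterexample, force $E=S$ by deletion, reduce to $|S|\le 4$, then do the case analysis. However, the reduction to $|S|\le 4$ is the real content of their proof, and you only assert it.

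So either insert ``a twist of'' into the statement, or state only the direction ``binary $\Rightarrow$ no $D_i$-minor.'' That direction is true as written and is all that Section~6 needs.
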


\subsection{A non-binary example for the twist polynomial}

Let \(E=\{1,2,\ldots,8\},\)
and define
\[
D=(E,\F),
\qquad
\F=\{F\subseteq E\mid |F|\in\{0,1,3,4\}\}.
\]
We first verify that $D$ is a delta-matroid. Let $X,Y\in\F$ and let $u\in X\triangle Y$. 

If $u\in X\backslash Y$, then $|X\triangle u|=|X|-1$.  If $|X|\neq 3$, this is immediately \(X\triangle u\in \F\).
When $|X|=3$, if $Y\backslash X\neq\emptyset$, choose $v\in Y\backslash X$.  Then
$X\triangle\{u,v\}$ has the same size as $X$, namely $3$, and is feasible.  If
$Y\backslash X=\emptyset$, then $Y\subseteq X$.  Since $|Y|\in\{0,1,3,4\}$ and $Y\neq X$,
we have $|Y|=0$ or $1$.  Hence, $X\backslash Y$ has at least two elements, and we may choose
$v\in X\backslash Y$ with $v\neq u$.  Then \(|X\triangle\{u,v\}|=1,\) so the resulting set is feasible.

If $u\in Y\backslash X$, then $|X\triangle u|=|X|+1$. If $|X|\neq 1$ and $|X|\neq 4$, this is immediately \(X\triangle u\in \F\).
When $|X|=1$, if $Y\backslash X$ contains another element $v\neq u$, then
\(|X\triangle\{u,v\}|=3,\) so the resulting set is feasible.  If $Y\backslash X=u$, then either $Y=X\cup u$, which
would have size $2$ and hence be impossible, or $X$ and $Y$ are distinct singletons.
In the latter case, set $Y = u$ and $X = v$. Then we have
\(X \triangle \{u,v\} = u = Y \in \mathcal F.\)
When $|X|=4$, since $Y$ is feasible and has size at most $4$, the condition
$u\in Y\backslash X$ forces $X\backslash Y\neq\emptyset$.  Choosing $v\in X\backslash Y$ gives \(|X\triangle\{u,v\}|=4,\) and hence \(X\triangle\{u,v\}\) is a feasible set. Therefore, the symmetric exchange axiom holds,
and $D$ is a delta-matroid.

Next we show that $D$ is not binary. Since \(D/1\backslash \{5,6,7,8\} \) is isomorphic to \(D_1\), by Lemma~\ref{thm4}, $D$ is not binary.

We now compute its twist polynomial. Since the feasible sets of $D$ are determined
only by their cardinalities, the width of $D^{\ast|A}$ depends only on \(a=|A|.\) That is, \(w(D^{\ast|A})=w(D^{\ast|B})\) whenever \(|A|=|B|.\)
A direct calculation gives
\[
\begin{array}{c|ccccccccc}
a&0&1&2&3&4&5&6&7&8\\
\hline
w(D^{\ast|A})&4&5&5&7&8&7&5&5&4
\end{array}
\]
Therefore,
\[
\begin{aligned}
{^{\partial}w_{D}^{\ast}(z)}
&=
\sum_{A\subseteq E} z^{w(D^{\ast|A})}
&=
2z^4+72z^5+112z^7+70z^8.
\end{aligned}
\]

This polynomial contains nonzero terms in both even and odd degrees, but it is
not both even-interpolating and odd-interpolating.
This example shows that the twist polynomial interpolation theorem for binary
delta-matroids cannot be extended to arbitrary delta-matroids.

\subsection{Non-binary examples for the partial-\(\ast\times\) and partial-\(\times\ast\) polynomial}

We now give a second example showing that the binary hypothesis is also essential for
the two non-involutive operations $\ast \times  $ and $\times\ast $. Let
\[
E=\{1,2,\ldots,7\},
\]
and define
\[
D=(E,\F),
\qquad
\F=\{F\subseteq E \mid |F|\leq 2\}.
\]
This is a delta-matroid. Indeed, let $X,Y\in\F$ and let $u\in X\triangle Y$. If $u\in X\backslash Y$, choose $v=u$. Since $|X|\leq 2$, we have $|X\triangle u|\leq 2$, so $X\triangle u$ is a feasible set. If $u \in Y \backslash X$ and $|X| \leq 1$, taking $v = u$ yields that $X \triangle u$ is a feasible set.  Finally, if $u\in Y\backslash X$ and $|X|=2$,
then either there exists $v\in X\backslash Y$, in which case
\(
|X\triangle\{u,v\}|=2,
\)
or $X\subseteq Y$, implying $|Y|\geq 3$, a contradiction. Hence, the
symmetric exchange axiom holds.

Since \(D\) has a minor isomorphic to \(D_2\), by Lemma~\ref{thm4}, $D$ is not binary. For this delta-matroid, a direct computation gives
\[
{^{\partial}w_{D}^{\times\ast }(z)}
=
8z^2+21z^3+35z^5+63z^6+z^7.
\]

This polynomial contains nonzero terms in both even and odd degrees, but it is
not both even-interpolating and odd-interpolating.
\[
{^{\partial}w_{D}^{\ast\bullet\ast}(z)}={^{\partial}w_{D^\ast}^{\bullet}(z)},
\]
taking $\bullet=\ast \times  $ gives
\[
{^{\partial}w_{D}^{\times\ast }(z)}={^{\partial}w_{D^\ast}^{\ast \times}(z)},
\]
Hence, the dual delta-matroid $D^\ast$ gives a corresponding non-binary example for
$\ast\times$.

These two examples show that
the binary condition is not merely a technical assumption
in the proof. This condition is already necessary for the twist operation, and it remains necessary for the non-involutive partial-twualities $\ast\times$ and $\times\ast$.

\section*{Acknowledgements}
This work is supported by NSFC (Nos. 12471326, 12571379).

\end{document}